\documentclass[a4paper,11pt]{article}

\usepackage[top=3.2cm, bottom=3cm, left=3.45cm, right=3.45cm]{geometry}
\setlength{\parindent}{0pt}
\setlength{\parskip}{6pt plus 2pt minus 1pt}
\usepackage{scalerel}
\usepackage[T1]{fontenc}
\usepackage[utf8]{inputenc}
\usepackage{latexsym}
\usepackage{amsmath,amsthm,amssymb,amsfonts,hyperref}
\usepackage{mathtools}
\usepackage[english]{babel}
\usepackage{tikz}
\usetikzlibrary{patterns, matrix, shapes}
\usepackage{tikz-cd}
\usepackage{booktabs}
\usetikzlibrary{patterns, matrix}
\tikzstyle{NE-lines}=[pattern=north east lines, pattern color=black!45]
\usepackage{soul} 
\usepackage{url}

\usepackage[noblocks]{authblk}

\DeclareMathOperator{\img}{Im}
\newcommand{\etal}{et~al.}

\mathchardef\mhyphen="2D

\DeclareMathOperator{\Cay}{Cay}                
\DeclareMathOperator{\Sym}{\mathfrak{S}}       
\DeclareMathOperator{\I}{I}                    
\DeclareMathOperator{\Modinv}{\hat{I}}         
\DeclareMathOperator{\Ascseq}{A}               
\newcommand{\dA}{\Ascseq_d}                    
\DeclareMathOperator{\Modasc}{\hat{\Ascseq}}   
\DeclareMathOperator{\Self}{\widetilde{\Ascseq}}   
\DeclareMathOperator{\Selfinv}{\tilde{\I}}     
\newcommand{\dModasc}{\Modasc_d}               
\newcommand{\kAscseq}[1]{\Ascseq_{#1}}         
\newcommand{\kModasc}[1]{\hat{\Ascseq}_{#1}}   
\DeclareMathOperator{\F}{\mathfrak{F}}         
\DeclareMathOperator{\SMF}{\widetilde{\F}}         
\DeclareMathOperator{\RGF}{RGF}                

\DeclareMathOperator{\Bur}{Bur}                
\DeclareMathOperator{\WI}{WI}                  
\DeclareMathOperator{\fishpattern}{\mathfrak{f}} 
\DeclareMathOperator{\burget}{\mathtt{t}}        
\DeclareMathOperator{\sort}{sort}                
\newcommand{\beh}{\hat{\be}}
\newcommand{\R}{\mathcal{R}}
\newcommand{\selfpat}{\mathfrak{s}}

\DeclareMathOperator{\Asc}{Asc}          
\newcommand{\dAsc}{\Asc_d}               
\DeclareMathOperator{\asc}{asc}          
\newcommand{\dasc}{\asc_d}               
\DeclareMathOperator{\wDes}{wDes}        
\DeclareMathOperator{\wdes}{wdes}        
\DeclareMathOperator{\ides}{ides}        
\DeclareMathOperator{\nub}{nub}          
\DeclareMathOperator{\identity}{id}      
\DeclareMathOperator{\ltrmax}{lrMax}

\newcommand{\dhat}{\mathrm{hat}_d}
\newcommand{\khat}[1]{\mathrm{hat}_{#1}}


\newcommand{\al}{\alpha}
\newcommand{\be}{\beta}
\newcommand{\ga}{\gamma}
\newcommand{\alh}{\hat{\al}}
\DeclareMathOperator{\id}{id}
\DeclareMathOperator{\des}{des}

\newtheorem{theorem}{Theorem}[section]
\newtheorem{theorem*}{Theorem}[section]
\newtheorem{proposition}[theorem]{Proposition}
\newtheorem{lemma}[theorem]{Lemma}
\newtheorem{corollary}[theorem]{Corollary}

\newtheorem*{openproblem*}{Open Problem}

\theoremstyle{definition}

\newtheorem*{remark*}{Remark}

\newtheorem*{example*}{Example}

\begin{document}
\title{Self-modified difference ascent sequences}
\author[1]{Giulio Cerbai\thanks{G.C. is member of the Gruppo Nazionale Calcolo Scientifico--Istituto Nazionale di Alta Matematica (GNCS-INdAM).}}
\author[1]{Anders Claesson}
\author[2]{Bruce E. Sagan}
\affil[1]{Department of Mathematics, University of Iceland,
Reykjavik, Iceland, \texttt{akc@hi.is}, \texttt{giulio@hi.is}.}
\affil[2]{Department of Mathematics, Michigan State University,
East Lansing, MI 48824-1027, USA, \texttt{sagan@math.msu.edu}}
\date{}
\maketitle

\begin{abstract}
Ascent sequences play a key role in the combinatorics of Fishburn
structures. Difference ascent sequences are a natural generalization
obtained by replacing ascents with $d$-ascents.
We have recently extended the so-called hat map to difference ascent
sequences, and self-modified difference ascent sequences are the
fixed points under this map.
We characterize self-modified difference ascent sequences and
enumerate them in terms of certain generalized Fibonacci polynomials.
Furthermore, we describe the corresponding subset of $d$-Fishburn
permutations.
\end{abstract}

\thispagestyle{empty} 

\section{Introduction}

Ascent sequences were introduced by Bousquet-M\'{e}lou, Claesson, Dukes,
and Kitaev in 2010~\cite{BMCDK:2fp}. 
They are nonnegative integer sequences whose growth of their
entries is bounded by the number of ascents contained in the corresponding
prefix. In the same paper, modified ascent sequences were defined as the
bijective image of ascent sequences under a certain hat map.
Since then, both structures have been studied in their own
right~\cite{BP:asc,CMS:res,Cer24,Cer24b,CDDDS:asc,CCEPG:pat,DuS:pat,FJLY:new,
GK:num,JS:sep,KR:pasc,MS:enum,Yan:asc}.
Ascent sequences have an arguably more natural combinatorial definition,
which Bousquet-M\'{e}lou {\etal}~\cite{BMCDK:2fp} leveraged to show that
these objects are enumerated by the elegant power series
\begin{equation*}
  \sum_{n\geq 0}\prod_{k=1}^n\bigl(1-(1-x)^k\bigr)
\end{equation*}
whose coefficients are the Fishburn numbers (A022493 in the OEIS~\cite{oeis}).
Additionally, ascent sequences bijectively encode a recursive
construction of so-called Fishburn permutations, the latter defined
by avoidance of a certain bivincular pattern of length three.
On the other hand, modified ascent sequences interact more
transparently with interval orders~\cite{BMCDK:2fp} and
Fishburn trees~\cite{CC:trees}. They also have the advantage of being
Cayley permutations, i.e.\ sequences where all the integers between one
and the maximum value appear at least once.
Modified ascent sequences and Fishburn permutations are related by the
Burge transpose~\cite{CC:tpb}.

Different variants and generalizations of ascent sequences have been
proposed. Of particular interest to us are the weak ascent
sequences~\cite{BCD:was} and, more generally, the difference $d$-ascent
sequences recently introduced by Dukes and Sagan~\cite{DS:das}.
These are obtained by relaxing the bound on the growth of the
rightmost entry, which is achieved by replacing ascents with difference $d$-ascents.
For ease in reading, we will usually suppress the term ``difference" and just write $d$-ascent sequences, $d$-ascents, and so forth.
In our companion paper~\cite{CCS:mod}, we have generalized the hat map
to an injection that acts on $d$-ascent sequences, the $d$-hat,
obtaining modified $d$-ascent sequences in the process.
We have also studied some combinatorial properties of plain and modified
$d$-ascent sequences, as well as of the $d$-hat, exploring their
interplay with inversion sequences and the so-called difference $d$-Fishburn
permutations~\cite{ZZ:dperm}. Namely, we proved that the Burge transpose
maps bijectively modified $d$-ascent sequences to $d$-Fishburn
permutations.

The main objective of this paper is twofold, generalizing two theorems in Section 4.4 of~\cite{BMCDK:2fp}. First, we characterize and
enumerate the set $\Self_d$ of $d$-ascent sequences that are
invariant under the $d$-hat map, the self-modified $d$-ascent sequences.
As far as the characterization, we prove in Theorem~\ref{dself_char} 
that $\Self_d$ is the intersection
of $d$-ascent sequences and modified $d$-ascent sequences,
and thus $\Self_d$ is the largest set it could possibly be.
For the enumerative aspect, in Theorem~\ref{self-gf} we provide a two-variable generating
function for $\Self_d$:
$$
\Self_d(q,x) = 1 + qxF^{!}_d(x,qx)
$$
where $q$ tracks the length, $x$ tracks the maximum value, and
$F^{!}_d(x,y)$ are polynomials whose recursive definition is inspired
by the Fibonacci numbers and the Fibotorial (Fibonacci factorial).
Our second goal is accomplished in Theorem~\ref{thm_dselfperm}: we characterize by pattern
avoidance the set of $d$-Fishburn permutations corresponding
to self-modified $d$-ascent sequences, extending a characterization
by barred patterns given by Bousquet-M\'{e}lou {\etal}~\cite{BMCDK:2fp}
for the case $d=0$.

\section{Preliminaries}\label{section_intro}

Let $n$ be a nonnegative integer and let $\al:[n]\to [n]$ be an
endofunction, where $[n]=\{1,2,\ldots,n\}$.
For succinctness, we may identify
$\al$ with the word $\al=a_1\ldots a_n$, where $a_i=\al(i)$
for each $i\in[n]$. An index $i\in [n]$ is an \emph{ascent} of~$\al$ if
$i=1$ or $i\ge2$ and $a_i>a_{i-1}$. We define the {\em ascent set} of
$\al$ by
\[
  \Asc\al=\{i\in[n] \mid \text{$i$ is an ascent of $\al$}\}
\]
and we let $\asc\al=\#\Asc\al$ denote the number of ascents in $\alpha$.
Our conventions differ from some others in the literature in that we are
taking the indices of ascent tops, rather than bottoms, and that the
first position is always an ascent.

We call $\al$ an \emph{ascent sequence of length $n$} if for all $i\in[n]$ we have
\[
  a_i \le 1 + \asc(a_1\ldots a_{i-1}).
\]
In particular, $a_1\le 1+\asc\epsilon=1$, where~$\epsilon$
denotes the empty sequence. Since the entries of $\al$ are positive
integers, this forces $a_1=1$. 
We let
$$
\Ascseq=\{\al \mid \text{$\al$ is an ascent sequence}\}.
$$
As an example, the ascent sequences of length $3$ are
$$
111, 112, 121, 122, 123.
$$
%

Let $d$ be a nonnegative integer. An index $i\in[n]$ is a
\emph{$d$-ascent} if $i=1$ or $i\ge2$ and
$$
a_i>a_{i-1} - d.
$$
As with ordinary ascents, we define the {\em $d$-ascent set} of $\al$ by
$$
\dAsc\al=\{ i\in [n] \mid \text{$i$ is a $d$-ascent of $\al$}\}.
$$
and we let $\dasc\al=\#\dAsc\al$ denote the number of $d$-ascents in $\al$.
Note that a $0$-ascent is simply an ascent, while a $1$-ascent
($a_i>a_{i-1}-1$ or equivalently $a_i\ge a_{i-1}$) is what
is called a {\em weak ascent}.

We call $\al$ a \emph{$d$-ascent sequence} if for all $i\in[n]$ we have
$$
a_i \le 1 + \asc_d(a_1\ldots a_{i-1}).
$$
Once again, the above restriction forces $a_1=1$. 
Let
$$
\kAscseq{d,n} =\{\al \mid \text{$\al$ is a $d$-ascent sequence of length $n$}\}
$$
and
$$
\dA =\biguplus_{n\ge0} \Ascseq_{d,n}.
$$

Clearly, for $d=0$ we recover the set of ascent sequences,
while for $d=1$
we obtain the set of \emph{weak ascent sequences} of B\'enyi,
Claesson, and Dukes~\cite{BCD:was}.

Given an ascent sequence $\al$ we can form the corresponding
\emph{modified ascent sequence}~\cite{BMCDK:2fp} as follows: Scan the
ascents of $\al$ from left to right; at each step, every element
strictly to the left of and weakly larger than the current ascent is
incremented by one. More formally, let
$$
M(\al,j)=\al^+,\text{ where }\al^+(i)=a_i+
\begin{cases}
1 & \text{if $i<j$ and $a_i\geq a_{j}$,}\\
0 & \text{otherwise,}
\end{cases}
$$
and extend the definition of $M$ to multiple indices $j_1$,$j_2$,\dots,$j_k$
by
$$
M(\al,j_1,j_2,\dots,j_k)= M\bigl(M(\al,j_1,\dots,j_{k-1}),j_k\bigr).
$$
Then
$$
\alh=M(\al,\Asc\al),
$$
where in this context $\Asc\al$ is the ascent list of $\al$, that is, the ascent set of $\al$
ordered in increasing order.
For example, consider $\al=121242232$. The ascent set of
$\al$ is $\{1,2,4,5,8\}$; the ascent list is
$\Asc\al=(1,2,4,5,8)$, and we compute
$\alh$ as follows, where at each stage the entry governing the
modification is underlined while the entries which are modified are
bold:
\begin{align*}
  \al              &= 121242232;\\
  M(\al,1)         &= \underline{1}21242232;\\
  M(\al,1,2)       &= 1\underline{2}1242232;\\
  M(\al,1,2,4)     &= 1\mathbf{3}1\underline{2}42232;\\
  M(\al,1,2,4,5)   &= 1312\underline{4}2232;\\
  M(\al,1,2,4,5,8) &= 1\mathbf{4}12\mathbf{5}22\underline{3}2=\alh.
\end{align*}
Let
$$
\Modasc = \{\alh \mid \al\in\Ascseq\}.
$$
The construction described above can easily be inverted since
$\Asc\al=\Asc\alh$. In other words, the mapping $\Ascseq\to\Modasc$
by $\al\mapsto\alh$ is a bijection.

We~\cite{CCS:mod} have recently extended the ``hat map'',
$\al\mapsto\alh$, to $d$-ascent sequences: Let $d\ge 0$ and
$\al\in\kAscseq{d}$. Define \emph{$d$-hat} of $\al$ as
$$
\dhat(\al)=M\bigl(\al,\dAsc\al\bigr),
$$
where $\dAsc\al$ is the $d$-ascent list of~$\al$
obtained by putting the set in increasing order, and let
$$
\dModasc=\dhat(\dA)
$$
denote the set of \emph{modified $d$-ascent sequences}. As a special
case, $\khat{0}(\al)=\alh$, for each $\al\in\Ascseq_0$, and $\Modasc_0$
coincides with the set of modified ascent sequences defined by
Bousquet-M\'{e}lou~{\etal}~\cite{BMCDK:2fp}.
We will also use $\alh$ for $\dhat(\al)$ if $d$ is clear from context.

One may alternatively define the set
$\dModasc$ recursively~\cite{CCS:mod}:
Let $d\ge 0$ be a nonnegative integer. Let $\kModasc{d,0}=\{\epsilon\}$
and $\kModasc{d,1}=\{1\}$. Suppose $n\ge 2$ and write
$\al\in A_{d,n}$ as a concatenation
$$
\al=\be a
$$
where $a$ is the last letter of $\al$ and $\be$ is the corresponding prefix.  We also let $b$ denote the last letter of $\be$, which is also the last letter of $\beh$.
 Then
$\alh\in\kModasc{d,n}$ is of one of two forms depending on whether the
$a$ forms a $d$-ascent with the penultimate letter of $\al$:
\begin{itemize}
\item $\alh = \beh a$ \, if\, $1\leq a \leq b-d$, or
\item $\alh = \beh^+ a$ \, if\, $b-d< a \leq 1+\max\beh$,
\end{itemize}
where $\beh^+$ is defined by increasing every element of $\beh$ which is at least $a$ by one.
When there is no risk of ambiguity, we use ``+'' as a superscript that
denotes the operation of adding one to the entries greater than or equal
to~$a$ of a given sequence, where $a$ is a threshold determined by the
context. For instance, in $\beh^+a$ we let $\beh^+$ denote the sequence
obtained by adding one to each entry of $\beh$ that is greater than
or equal to $a$.


Let us recall a few more
definitions and results from~\cite{CCS:mod} that we shall need below.
An endofunction $\al=a_1\ldots a_n$ is an \emph{inversion sequence} if
$a_i\leq i$ for each $i\in [n]$. Let $\I_n$ denote the set of
inversion sequences of length $n$ and let $\I=\cup_{n\geq 0} \I_n$. We showed that
\begin{align}
  \I &= \bigcup_{d\ge 0}\dA, \nonumber
  \intertext{where the union is not disjoint,
  and we defined the set~$\Modinv$ of \emph{modified inversion sequences} by}
  \Modinv &= \bigcup_{d\ge 0}\dModasc. \label{eq_def_modinv}
\end{align}

A \emph{Cayley permutation} is an endofunction $\al$ where
$\img\al=[k]$ for some $k\le n$. Thus, a nonempty endofunction $\al$ is a Cayley
permutation if it contains at least one copy of each integer between~$1$
and its maximum element. The set of Cayley permutations of length $n$ is
denoted by $\Cay_n$.

\begin{proposition}[\cite{CCS:mod}]\label{misc}
  Given $d\ge 0$, let $\al\in\dA$ and let $\alh=\dhat(\al)$. Then
  \begin{itemize}
  \item[(a)] $\alh$ is a Cayley permutation,
  \item[(b)] $\dAsc\al=\nub\alh$,
  \item[(c)] $\dAsc\alh\subseteq\nub\alh$,
  \end{itemize}
  where 
  $$
  \nub\al = \{\min \al^{-1}(j) \mid 1\leq j\leq \max\al \}
  $$
  is the set of positions of leftmost occurrences.\footnote{
  The word ``nub'' comes from a Haskell function that removes duplicate
elements from a list, keeping only the first occurrence of each
element. It is also short for ``not used before.''}
\end{proposition}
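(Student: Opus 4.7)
The plan is to prove (a), (b), (c) together by induction on $n=|\al|$, using the recursive description of $\dhat$ recalled just above the proposition. The base cases $n\le 1$ are immediate, since $\dhat(\epsilon)=\epsilon$ and $\dhat(1)=1$. For the inductive step, write $\al=\be a$ with last letter $b$ of $\be$, and by induction assume (a)--(c) hold for $\beh$. The analysis splits exactly as in the recursive definition: either $1\le a\le b-d$ (so $n\notin \dAsc\al$) and $\alh=\beh a$, or $b-d<a\le 1+\max\beh$ (so $n\in\dAsc\al$) and $\alh=\beh^+ a$. A preliminary observation that I will use throughout is that the ``$+$'' operation is a values-only relabeling: it increments every occurrence of each value $\ge a$ by one, uniformly across positions. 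Consequently the positions of first occurrences are unchanged, giving $\nub\beh^+=\nub\beh$, and $\img\beh^+$ is obtained from $\img\beh=[\max\beh]$ by deleting $a$ and possibly appending $\max\beh+1$.

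For (a), in the first case $a$ already occurs in $\beh$ because $1\le a\le b\le\max\beh$ and $\beh$ is Cayley by induction, so $\img\alh=\img\beh$. In the second case the gap at value $a$ in $\img\beh^+$ is filled by the appended final letter $a$, and any newly created maximum is handled automatically, so $\alh$ is Cayley.

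For (b), in the first case neither side changes at position $n$: the new letter $a$ already appears in $\beh$, and $n\notin\dAsc\al$, so the desired equality reduces to the inductive one $\dAsc\be=\nub\beh$. In the second case both sides gain exactly $\{n\}$: the letter $a$ is absent from $\img\beh^+$ so $n\in\nub\alh$, while by hypothesis $n\in\dAsc\al$; combined with $\nub\beh^+=\nub\beh$ and the inductive equality, this yields $\dAsc\al=\nub\alh$.

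For (c) I will argue the contrapositive: if $i\notin\nub\alh$, then $i$ is not a $d$-ascent of $\alh$. For $i=n$ this is either immediate (first case, since $\alh_n=a\le b-d=\alh_{n-1}-d$) or vacuous (second case, since $n\in\nub\alh$ as shown above). For $i<n$, the identity $\nub\beh^+=\nub\beh$ plus the inductive inclusion $\dAsc\beh\subseteq\nub\beh$ gives $\beh_i\le\beh_{i-1}-d$. The main obstacle is transferring this inequality from $\beh$ to $\beh^+$ in the second case, and I would handle it by a short case split on whether each of $\beh_i$ and $\beh_{i-1}$ is below or at least the threshold $a$: if both lie on the same side of $a$ the $+$ operation shifts them together and the inequality is preserved; if $\beh_{i-1}\ge a>\beh_i$ the operation only widens the gap, making the inequality easier; and the remaining configuration $\beh_i\ge a>\beh_{i-1}$ cannot occur since $\beh_i\le\beh_{i-1}-d\le\beh_{i-1}$. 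This closes the induction.
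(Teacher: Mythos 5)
Your proof is correct. Note, however, that the paper itself offers no proof of this proposition: it is imported verbatim from the companion paper \cite{CCS:mod}, so there is nothing in this document to compare your argument against line by line. What you have written is a sound, self-contained verification by induction on the length, using the recursive description of $\dhat$ ($\alh=\beh a$ or $\alh=\beh^+a$) that the paper does recall in its preliminaries, and this is the natural route. The two observations that carry the whole argument are exactly right: the ``$+$'' operation is induced by a strictly increasing relabelling of values, so $\nub\beh^+=\nub\beh$ and $\img\beh^+$ is $[\max\beh]$ with the value $a$ removed and (when $a\le\max\beh$) the value $\max\beh+1$ adjoined; and the case analysis on the positions of $\beh_{i-1},\beh_i$ relative to the threshold $a$ correctly shows that the non-$d$-ascent inequality $\beh_i\le\beh_{i-1}-d$ survives the passage to $\beh^+$, with the would-be problematic configuration $\beh_i\ge a>\beh_{i-1}$ ruled out by that very inequality. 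The boundary position $i=n$ and the base cases are handled correctly, so the induction closes.
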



\section{Self-modified \texorpdfstring{$d$}{d}-ascent sequences}\label{section_selfmod}

Recall~\cite{BMCDK:2fp} that an ascent sequence $\al$ is \emph{self-modified}
if $\al=\khat{0}(\al)$. We extend this notion to $d$-ascent sequences by
saying that $\al\in\dA$ is \emph{$d$-self-modified} if $\dhat(\al)=\al$.
Let
$$
\Self_d=\{\al\in\dA\mid\dhat(\al)=\al\}
$$
denote the set of $d$-self-modified $d$-ascent sequences; or, in short,
self-modified $d$-ascent sequences.
The main goal of this section is to obtain a better understanding of
the sets $\Self_d$. Namely, in Theorem~\ref{dself_char} we prove that
$$
\Self_d=\dA\cap\dModasc.
$$
Note that this is the largest set $\Self_d$ could possibly be.
Further, we prove in Theorem~\ref{dself_zero} that
$\Self_{d+1}\subseteq\Self_d$ for each $d\ge 0$, from which
the curious chain of inclusions
\begin{equation}\label{eq_chain}
\cdots\subseteq
\Self_2\subseteq\Self_1\subseteq\Self_0
\subseteq\kAscseq{0}\subseteq\kAscseq{1}\subseteq\kAscseq{2}
\subseteq\cdots
\end{equation}
follows. We start with a couple of simple lemmas.

Let $\al=a_1\ldots a_n$ be an endofunction. An index $i\in [n]$ is
a \emph{left-right maximum} of~$\al$ if $a_i>a_j$ for each $j\in[i-1]$.
We will use the notation
$$
\ltrmax\al = \{i\in [n] \mid \text{$i$ is a left-right maximum of $\al$}\}.
$$
It is easy to see that
\begin{equation}\label{eq_lrmax}
\ltrmax\al\subseteq\nub\al
\quad\text{and}\quad
\ltrmax\al\subseteq\Asc_0\al,
\end{equation}
two facts we will use often in this section.

\begin{lemma}\label{ltrmax_is_dasc}
For every $\al\in\I$ and $d\ge 0$, we have
$\ltrmax\al\subseteq\dAsc\al$.
\end{lemma}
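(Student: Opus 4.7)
The plan is to unpack both definitions and observe that the claim is essentially immediate; in fact the hypothesis $\al\in\I$ appears to be unused, so the statement holds for any endofunction. I would pick an arbitrary $i\in\ltrmax\al$ and split on whether $i=1$ or $i\ge 2$, matching the two clauses in the definition of $d$-ascent.

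If $i=1$, then $1\in\dAsc\al$ automatically, since the first position is declared to be a $d$-ascent by convention.

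If $i\ge 2$, then the left-right maximum condition, applied with $j=i-1\in[i-1]$, gives the strict inequality $a_i>a_{i-1}$. Combining this with $d\ge 0$ (which yields $a_{i-1}\ge a_{i-1}-d$) produces $a_i>a_{i-1}-d$, which is exactly the defining condition for $i$ to be a $d$-ascent. Hence $i\in\dAsc\al$ in this case as well, so $\ltrmax\al\subseteq\dAsc\al$.

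The ``hard part'' is really just noticing that left-right maxima are by definition strict ascents (with respect to the immediate predecessor), and that strict ascents are trivially $d$-ascents for every $d\ge 0$. There is no genuine obstacle; the lemma is a one-line consequence of definitions, included presumably because it is convenient to cite later together with the companion inclusion $\ltrmax\al\subseteq\nub\al$ from \eqref{eq_lrmax}.
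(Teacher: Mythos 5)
Your proof is correct and is essentially the paper's argument: the paper simply cites the inclusion $\ltrmax\al\subseteq\Asc_0\al$ from equation~\eqref{eq_lrmax} and then notes $\Asc_0\al\subseteq\dAsc\al$ by definition of $d$-ascent, which is exactly the chain you unpack element-by-element (including the case split on $i=1$). Your observation that the hypothesis $\al\in\I$ is not actually needed is also accurate.
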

\begin{proof}
By equation~\eqref{eq_lrmax}, we have
\begin{equation}
 \label{lrM:Ascd}
\ltrmax\al\subseteq\Asc_0\al\subseteq\dAsc\al,
\end{equation}
where the last set containment follows by definition of $d$-ascent.
\end{proof}

\begin{lemma}\label{lemma_be_behplus}
Let $\be\in\kAscseq{d,n}$ and let $\beh=\dhat(\be)$. Fix $a\ge1$. Let $\beh^+$ be
obtained by increasing by one each entry of~$\beh$ greater than or
equal to~$a$. If $\be=\beh^+$, then $\be=\beh$.
\end{lemma}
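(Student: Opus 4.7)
The plan is short: exploit that both the $d$-hat map and the ``$+$''-operation are entrywise non-decreasing, so the hypothesis $\be=\beh^+$ forces both steps to be trivial.

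First I would establish the monotonicity $\beh(i)\ge \be(i)$ for every position $i$. This is immediate from the recursive construction $\beh=M(\be,\dAsc\be)$: each individual application of $M(\cdot,j)$ either leaves an entry unchanged or increments it by one, and these applications are composed iteratively over $j\in\dAsc\be$. Second, by the very definition of the ``$+$''-operation, $\beh^+(i)\ge\beh(i)$ holds for every $i$, with equality exactly at the positions where $\beh(i)<a$.

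Combining these two observations under the assumption $\be=\beh^+$ gives, for every $i$,
$$
\be(i)\;=\;\beh^+(i)\;\ge\;\beh(i)\;\ge\;\be(i),
$$
which collapses to equality throughout. In particular $\beh(i)=\be(i)$ for all $i$, i.e., $\be=\beh$ (and as a byproduct no entry of $\beh$ is $\ge a$, so $\beh^+=\beh$ as well). There is no real obstacle; the only point requiring any care is the monotonicity of $M$, which is built into its piecewise definition.
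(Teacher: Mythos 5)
Your proof is correct and essentially identical to the paper's: both establish the entrywise chain $\be(i)\le\beh(i)\le\beh^+(i)$ and observe that the hypothesis $\be=\beh^+$ collapses it to equality. The only difference is that you spell out the monotonicity of $M$ in slightly more detail than the paper, which simply cites the definitions.
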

\begin{proof}
For $i\in[n]$, denote by~$b_i$, $b'_i$ and $b''_i$ the
$i$th entry of~$\be$, $\beh$ and~$\beh^+$, respectively.
By definition of~$\beh$ and~$\beh^+$, we have
$$
b_i \le b'_i \le b''_i.
$$
If $\be=\beh^+$, then $b_i=b'_i=b''_i$ for every~$i\in [n]$,
and our claim follows.
\end{proof}

\begin{proposition}\label{selfmod_iff}
Let $\al\in\dA$ and let $\alh=\dhat(\al)$. The following five statements are equivalent:
\begin{itemize}
\item[(a)] $\alh=\al$.
\item[(b)] $\dAsc\al\subseteq\ltrmax\al$.
\item[(c)] $\dAsc\al\subseteq\nub\al$.
\item[(d)] $\dAsc\al=\ltrmax\al$.
\item[(e)] $\dAsc\al=\nub\al$.
\end{itemize}
Furthermore, if $\al\in\Self_d$ then $\dAsc\al=\Asc_0\al$.
\end{proposition}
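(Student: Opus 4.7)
My plan is to close the cycle (b) $\Rightarrow$ (a) $\Rightarrow$ (e) $\Rightarrow$ (c) $\Rightarrow$ (b) and bring (d) in via its easy equivalence with (b). Three of the four links in the cycle are straightforward; the real work is concentrated in (c) $\Rightarrow$ (b), which I would prove by induction on length after strengthening the conclusion to include Cayleyness.

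The routine links are as follows. For (b) $\Leftrightarrow$ (d), Lemma~\ref{ltrmax_is_dasc} supplies $\ltrmax\al \subseteq \dAsc\al$, so the one-sided containment in (b) upgrades to the equality in (d); the converse is immediate. For (b) $\Rightarrow$ (a), I will unpack the definition of $\dhat$: processing the $d$-ascents $j_1 < j_2 < \cdots < j_k$ in order, a short induction on $s$ shows that if each $j_s$ is a left-right maximum then no increment ever occurs (at the start of step $s$ the sequence is still $\al$, and no earlier entry is weakly greater than $a_{j_s}$), hence $\alh = \al$. For (a) $\Rightarrow$ (e), substituting $\alh = \al$ into Proposition~\ref{misc}(b) gives $\dAsc\al = \nub\alh = \nub\al$. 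The inclusion (e) $\Rightarrow$ (c) is trivial.

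The principal obstacle is (c) $\Rightarrow$ (b). I would prove it by strong induction on $n = |\al|$ while strengthening the conclusion to: $\dAsc\al = \ltrmax\al = \nub\al$ and $\al$ is a Cayley permutation with image $\{1, 2, \ldots, \max\al\}$. Property (c) is prefix-closed because $\dAsc(\al|_{[i]}) = \dAsc\al \cap [i]$ and $\nub(\al|_{[i]}) = \nub\al \cap [i]$, so the inductive hypothesis applies to $\be := \al|_{[n-1]}$. Writing $m = \max\be$, the inductive conclusion (Cayleyness plus $\dAsc\be = \nub\be$) forces $|\dAsc\be| = m$, and then the $d$-ascent sequence bound delivers $a_n \le 1 + \dasc\be = m+1$. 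This gives a dichotomy. If $a_n \le m$, Cayleyness of $\be$ places $a_n$ already in the image, so $n \notin \nub\al$; by (c) also $n \notin \dAsc\al$, and none of $\dAsc\al$, $\ltrmax\al$, $\nub\al$, or the image changes. If $a_n = m+1$, then $a_n > \max\be \ge a_{n-1}$, so $n$ is simultaneously a left-right maximum, a $0$-ascent (hence a $d$-ascent), and a nub, and the image extends to $\{1, \ldots, m+1\}$ while the three-way equality is preserved. This completes the induction and delivers (b).

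For the additional assertion: if $\al \in \Self_d$ then (b) holds, and combining it with the chain~\eqref{lrM:Ascd}, $\ltrmax\al \subseteq \Asc_0\al \subseteq \dAsc\al$, forces equality throughout; in particular $\dAsc\al = \Asc_0\al$.
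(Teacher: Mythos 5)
Your proposal is correct, but it is organized quite differently from the paper's proof. The paper establishes two separate biconditionals, (a)$\,\Leftrightarrow\,$(b) and (a)$\,\Leftrightarrow\,$(c), each by induction on length using the recursive description of $\dhat$ (writing $\al=\be a$ and splitting on whether $a$ forms a $d$-ascent with the last letter of $\be$), and in the key step of (c)$\,\Rightarrow\,$(a) it invokes Proposition~\ref{misc}(a) to get Cayleyness of $\be$ and conclude $a\ge k+1$, hence $\be^+=\be$. You instead close the cycle (b)$\,\Rightarrow\,$(a)$\,\Rightarrow\,$(e)$\,\Rightarrow\,$(c)$\,\Rightarrow\,$(b): your (b)$\,\Rightarrow\,$(a) is a direct ``no increment ever fires'' argument on the successive applications of $M$ (inducting on the steps of the hat map rather than on length), your (a)$\,\Rightarrow\,$(e) coincides with the paper's use of Proposition~\ref{misc}(b), and your hard link (c)$\,\Rightarrow\,$(b) is a length induction that never touches the hat map at all, carrying the three-way equality $\dAsc=\ltrmax=\nub$ together with Cayleyness as a strengthened hypothesis; the dichotomy $a_n\le m$ versus $a_n=m+1$ (where $m=\max\be$, forced by $a_n\le 1+\dasc\be=m+1$) is sound in both branches. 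The net effect is a more modular argument: the hat map is confined to the two easy links, and the combinatorial core is self-contained, even re-deriving Cayleyness of self-modified sequences rather than importing it. The paper's route keeps each individual step shorter by reusing its recursive characterization of $\dhat$ and Proposition~\ref{misc}. Your treatment of (b)$\,\Leftrightarrow\,$(d) via Lemma~\ref{ltrmax_is_dasc} and of the final assertion via the chain $\ltrmax\al\subseteq\Asc_0\al\subseteq\dAsc\al=\ltrmax\al$ matches the paper exactly.
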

\begin{proof}
Let us start by proving (a)$\,\Leftrightarrow\,$(b) and (a)$\,\Leftrightarrow\,$(c).

We prove (a)$\,\Leftrightarrow\,$(b) by induction on the length of~$\al$.
Our claim holds if~$\al$ has length at most one. Let $\al\in\dA$ and
suppose that $\al=\be a$, for some $\be\in\kAscseq{d,n}$,
where $1\le a\le 1+\asc_d\be$ and $n\ge 1$. Let $\beh=\dhat(\be)$.
We shall consider two cases, according to whether
or not~$a$ forms a $d$-ascent with the last letter~$b$ of~$\be$:
$$
\alh=\begin{cases}
\beh a, & \text{if $1\le a\le b-d$};\\
\beh^+ a, & \text{if $b-d<a\le 1+\asc_d\be$}.
\end{cases}
$$
Initially, suppose that $\al=\alh$. We show that
$\dAsc\al\subseteq\ltrmax\al$. If $a\le b-d$, then $\be=\beh$
since $\al=\alh$. Moreover,
$$
\dAsc\al=\dAsc\be\subseteq\ltrmax\be=\ltrmax\al,
$$
where we used our induction hypothesis on $\be$.
Otherwise, suppose that $b-d<a$. Since we assumed $\al=\alh$, we
have $\be a=\beh^+a$, from which $\be=\beh^+$ follows.
By Lemma~\ref{lemma_be_behplus}, we have $\be=\beh$.
Furthermore, since $\beh=\beh^+$, no entry of~$\beh$ is increased by one
in going from~$\beh$ to~$\beh^+$. Thus, $a>c$ for each entry~$c$
of $\beh=\be$, i.e.\ $n+1\in\ltrmax\al$. Finally,
$$
\dAsc\al=\dAsc\be\uplus\{n+1\}\subseteq\ltrmax\be\uplus\{n+1\}=\ltrmax\al,
$$
where $\dAsc\be\subseteq\ltrmax\be$ by our induction hypothesis.

To prove the converse, suppose that
$\dAsc\al\subseteq\ltrmax\al$. Then
$$
\dAsc\be=\dAsc\al\cap[n]\subseteq\ltrmax\al\cap[n]=\ltrmax\be,
$$
and $\beh=\be$ follows by our induction hypothesis. Now, if $a\le b-d$, then
$$
\alh=\beh a = \be a=\al,
$$
as wanted. On the other hand, if $b-d<a$, then
$$
n+1\in\dAsc\al\subseteq\ltrmax\al.
$$
Since $n+1\in\ltrmax\al$ and $\beh=\be$, no entry of~$\beh$ is increased by one
in~$\beh^+$. 
Thus $\beh^+=\beh$, and $\alh=\al$ follows immediately.

Let us now prove (a)$\,\Leftrightarrow\,$(c). The proof is similar to the one
just given so we shall keep the same notation. If $\alh=\al$, then
\begin{equation}
\label{Asc:nub}
\dAsc\al=\nub\alh=\nub\al,
\end{equation}
where the first equality is item~(b) of Proposition~\ref{misc}.

On the other hand, suppose that $\dAsc\al\subseteq\nub\al$.
Then
$$
\dAsc\be=\dAsc\al\cap[n]\subseteq\nub\al\cap[n]=\nub\be.
$$
and $\beh=\be$ by our induction hypothesis. Now, if $1\le a\le b-d$, then
$\alh=\beh a=\be a=\al$ and we are done. Otherwise, $b-d<a\le 1+\asc_d\be$
and $\alh=\beh^+a=\be^+ a$. Note that $\be$ is a Cayley
permutation by item~(a) of Proposition~\ref{misc}. In particular,
we have $\img\be=[k]$, where $k=\max\be$.
Further, we have $n+1\in\dAsc\al\subseteq\nub\al$. That is, the last
entry~$a$ is a leftmost copy in $\al=\be a$. Since $\img\be=[k]$,
it must be $a\ge k+1$.
In particular, $\be^+=\be$ and thus $\alh=\be^+a=\be a=\al$, as wanted.

Next, we prove that we can we replace the set inclusions
with equalities in items (b) and (c).
Clearly, it is enough to show that $\alh=\al$ implies
$$
\dAsc\al=\ltrmax\al=\nub\al,
$$
where the inclusions $\dAsc\al\subseteq\ltrmax\al$ and
$\dAsc\al\subseteq\nub\al$ hold by what has been proved before.
The equality $\dAsc\al=\ltrmax\al$ now follows directly from
Lemma~\ref{ltrmax_is_dasc}. 
And $\dAsc\al=\nub\al$ was proved in~\eqref{Asc:nub}.

Finally, we prove that $\dAsc\al=\Asc_0\al$ if $\al\in\Self_d$.
Using equation~\eqref{lrM:Ascd} and item~(d), we obtain
$$
\ltrmax\al\subseteq\Asc_0\al\subseteq\dAsc\al=\ltrmax\al,
$$
from which the desired equality follows.
\end{proof}


%

We are now ready for the promised characterization of $\Self_d$.

\begin{theorem}\label{dself_char}
For each $d\ge 0$, we have $\Self_d=\dA\cap\dModasc$.
\end{theorem}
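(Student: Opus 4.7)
The plan is to prove the two inclusions $\Self_d \subseteq \dA \cap \dModasc$ and $\dA \cap \dModasc \subseteq \Self_d$ separately, with the second being where the real content of the statement lies.

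The first inclusion is essentially definitional. If $\al \in \Self_d$, then by definition $\al \in \dA$, and the equality $\al = \dhat(\al)$ immediately places $\al$ in the image $\dhat(\dA) = \dModasc$. So $\al \in \dA \cap \dModasc$.

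For the reverse inclusion, I would take $\al \in \dA \cap \dModasc$ and aim to verify condition~(c) of Proposition~\ref{selfmod_iff}, namely $\dAsc\al \subseteq \nub\al$, which by that proposition is equivalent to $\dhat(\al) = \al$. Since $\al \in \dModasc$, there exists some $\ga \in \dA$ with $\dhat(\ga) = \al$. Apply Proposition~\ref{misc}(c) to~$\ga$: writing $\gh = \dhat(\ga)$, we get $\dAsc\gh \subseteq \nub\gh$, i.e.\ $\dAsc\al \subseteq \nub\al$, which is exactly what we needed. Because $\al$ also lies in $\dA$, Proposition~\ref{selfmod_iff} is applicable and yields $\dhat(\al) = \al$, hence $\al \in \Self_d$.

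There is no real obstacle here; the proof amounts to assembling Proposition~\ref{misc}(c) (which delivers the inclusion $\dAsc \subseteq \nub$ for any element of $\dModasc$) with the equivalence (a)$\Leftrightarrow$(c) of Proposition~\ref{selfmod_iff} (which upgrades that inclusion to the fixed-point property, provided we already know $\al \in \dA$). The only subtlety worth flagging is that Proposition~\ref{selfmod_iff} is stated for elements of~$\dA$, which is why it is essential to include $\al \in \dA$ in the hypothesis of the reverse inclusion; membership in $\dModasc$ alone is not enough to invoke it directly on~$\al$.
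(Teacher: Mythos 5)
Your proof is correct and follows essentially the same route as the paper: the forward inclusion is definitional, and the reverse inclusion combines Proposition~\ref{misc}(c) (applied to a preimage under $\dhat$ to get $\dAsc\al\subseteq\nub\al$) with the equivalence (a)$\Leftrightarrow$(c) of Proposition~\ref{selfmod_iff}. Your explicit remark about why membership in $\dA$ is needed to invoke Proposition~\ref{selfmod_iff} is a nice touch but does not change the argument.
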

\begin{proof}
If $\al\in\Self_d$, then $\al\in\dA$ and $\al=\dhat(\al)\in\dModasc$
as well. On the other hand, suppose that $\al\in\dA\cap\dModasc$.
Since $\al\in\dModasc$, we have $\dAsc\al\subseteq\nub\al$ by
item~(c) of Proposition~\ref{misc}.
Then $\dhat(\al)=\al$ by Proposition~\ref{selfmod_iff}, i.e.\
we have $\al\in\Self_d$.
\end{proof}

We now have all the ingredients to prove equation~\eqref{eq_chain},
which is an immediate consequence of item~(a) of the following
theorem.

\begin{theorem}\label{dself_zero}
For every $d\ge 0$, we have:
\begin{itemize}
\item[(a)] $\Self_d\subseteq\Self_k$ for each $0\le k\le d$.
In particular, $\Self_d\subseteq\Self_0$ and
$\Self_{d+1}\subseteq\Self_d$.
\item[(b)] $\Self_d=\Self_0\cap\dModasc$.
\end{itemize}
\end{theorem}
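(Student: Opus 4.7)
For part (a), my plan is to fix $\al\in\Self_d$ and $0\le k\le d$, and check in turn that $\al\in\kAscseq{k}$ and that $\khat{k}(\al)=\al$. The final sentence of Proposition~\ref{selfmod_iff}, together with item~(d), already supplies $\dAsc\al=\ltrmax\al=\Asc_0\al$. Since the $k$-ascent condition $a_i>a_{i-1}-k$ becomes more restrictive as $k$ decreases, the evident sandwich
$$
\Asc_0\al\subseteq\Asc_k\al\subseteq\dAsc\al
$$
forces all three sets (and $\ltrmax\al$) to coincide. This is the crux of (a); everything else is chasing definitions.

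Because ``$j$ is a $k$-ascent'' depends only on $a_{j-1}$ and $a_j$, intersecting the displayed equality with $[i]$ gives $\Asc_k(a_1\ldots a_i)=\dAsc(a_1\ldots a_i)$, and hence $\asc_k(a_1\ldots a_i)=\asc_d(a_1\ldots a_i)$ for every $i$. The defining inequality $a_i\le 1+\asc_d(a_1\ldots a_{i-1})$ of $\dA$ therefore becomes $a_i\le 1+\asc_k(a_1\ldots a_{i-1})$, so $\al\in\kAscseq{k}$. Once $\al$ is known to live in $\kAscseq{k}$, the equality $\Asc_k\al=\ltrmax\al$ already established, combined with Proposition~\ref{selfmod_iff}(d) applied with parameter $k$, yields $\khat{k}(\al)=\al$, i.e.\ $\al\in\Self_k$.

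Part (b) then drops out: (a) with $k=0$ together with Theorem~\ref{dself_char} gives $\Self_d\subseteq\Self_0\cap\dModasc$. For the reverse inclusion, note that every ordinary ascent is a $d$-ascent, so $\asc\le\asc_d$ on every prefix and hence $\Ascseq_0\subseteq\dA$; thus any $\al\in\Self_0\cap\dModasc$ lies in $\dA\cap\dModasc=\Self_d$ by Theorem~\ref{dself_char}. The only place where care is needed is the step that promotes the equality of full ascent sets to equalities on every prefix, since that is what feeds the $d$-ascent-sequence inequality into its $k$-analogue; I do not anticipate any genuine obstacle beyond this bookkeeping.
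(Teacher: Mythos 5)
Your proposal is correct and follows essentially the same route as the paper: both use Proposition~\ref{selfmod_iff} to collapse the sandwich $\Asc_0\al\subseteq\Asc_k\al\subseteq\Asc_d\al$ into equalities, deduce $\al\in\kAscseq{k}$ from the prefix-by-prefix agreement of ascent counts, and then reapply the proposition with parameter $k$ (you via $\ltrmax$, the paper via $\nub$ --- equivalent by the same proposition), while part (b) is handled identically through Theorem~\ref{dself_char}. Your explicit remark that the $k$-ascent condition is local, so equality of full ascent sets passes to all prefixes, is a useful piece of bookkeeping that the paper leaves implicit.
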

\begin{proof}
%

(a) Let $\al\in\Self_d$ and let $0\le k\le d$.
We prove that $\al\in\Self_k$. By definition of $d$-ascent, we have
$$
\Asc_0\al\subseteq\Asc_k\al\subseteq\Asc_d\al.
$$
By Proposition~\ref{selfmod_iff}, we have $\Asc_0\al=\Asc_d\al$ and
$$
\Asc_0\al=\Asc_k\al=\Asc_d\al=\nub\al.
$$
In particular, $\al\in\kAscseq{k}$ since $\Asc_k\al=\Asc_d\al$
and $\al\in\kAscseq{d}$. Furthermore, using the same proposition once
more we have that $\al\in\Self_k$ since $\Asc_k\al=\nub\al$.

(b) We start with the inclusion
$\Self_d\subseteq\Self_0\cap\dModasc$. By
Theorem~\ref{dself_char},
$$
\Self_d=\dA\cap\dModasc\subseteq\dModasc.
$$
Note also that $\Self_d\subseteq\Self_0$ by item~(a) of
this theorem, from which the desired inclusion follows. To prove the
opposite inclusion, we can once again use Theorem~\ref{dself_char} to get
$$
\Self_0=\kAscseq{0}\cap\kModasc{0}
\subseteq\kAscseq{0}\subseteq\dA
$$
and
$$
\Self_0\cap\dModasc\subseteq\dA\cap\dModasc=\Self_d,
$$
which concludes the proof.
\end{proof}

The only inversion sequence that is $d$-self-modified for every $d\ge 0$
is the increasing permutation, as we will now show.

\begin{proposition}
We have
$$
\bigcap_{d\ge 0}\Self_d=\{ 12\ldots n\mid n\ge 0\}.
$$
\end{proposition}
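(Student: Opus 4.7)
The plan is to prove the two inclusions separately, using Proposition~\ref{selfmod_iff} as the main tool, together with the simple observation that for a sequence of bounded entries, every position becomes a $d$-ascent once $d$ is large enough.

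For the inclusion $\supseteq$, I take $\al=12\ldots n$. Since $a_i-a_{i-1}=1>-d$ for every $d\ge 0$, all positions of $\al$ are $d$-ascents, so $\dAsc\al=[n]$. Every position is also a left-right maximum, hence $\ltrmax\al=[n]$. Moreover $\al$ is obviously an ascent sequence, so $\al\in\dA$ for every $d$ (because $\Ascseq_0\subseteq\dA$, since $\asc_d\ge\asc_0$). Proposition~\ref{selfmod_iff}(d) then gives $\al\in\Self_d$ for every $d\ge 0$.

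For $\subseteq$, let $\al=a_1\ldots a_n\in\bigcap_{d\ge 0}\Self_d$ and set $m=\max\al$. I pick any $d\ge m$. Then for every $i\ge 2$ we have $a_{i-1}-d\le m-d\le 0<1\le a_i$, so $i\in\dAsc\al$; thus $\dAsc\al=[n]$. Since $\al\in\Self_d$, Proposition~\ref{selfmod_iff}(d) yields $\ltrmax\al=\dAsc\al=[n]$, i.e.\ $\al$ is strictly increasing. Now $\al\in\Self_0\subseteq\Ascseq_0$, so $a_1=1$ and $a_i\le 1+\asc(a_1\ldots a_{i-1})$ for each $i$. Being strictly increasing forces $\asc(a_1\ldots a_{i-1})=i-1$, hence $a_i\le i$; together with $a_i>a_{i-1}$, a straightforward induction starting from $a_1=1$ gives $a_i=i$ for every $i\in[n]$, so $\al=12\ldots n$.

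I do not foresee a real obstacle here. The one small point that deserves attention is making sure that, when applying Proposition~\ref{selfmod_iff}(d) with a large value of $d$, the sequence $\al$ really lies in $\dA$ for that $d$; this is immediate from $\al\in\Self_d\subseteq\dA$, or alternatively from the chain~\eqref{eq_chain} together with $\Ascseq_0\subseteq\dA$.
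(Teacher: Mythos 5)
Your proof is correct and follows essentially the same strategy as the paper's: both directions hinge on choosing $d$ large enough (you use $d\ge\max\al$, the paper uses $d=n$) so that every index is a $d$-ascent, which together with self-modification forces the sequence to be strictly increasing and hence equal to $12\ldots n$. The only cosmetic difference is that you route both inclusions through Proposition~\ref{selfmod_iff}(d), whereas the paper argues the reverse inclusion directly from the action of the hat map (an entry $a_i\ge a_{i+1}$ would be incremented by $\khat{n}$) and the forward inclusion from the recursive description of $\dhat$.
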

\begin{proof}
It follows immediately from the inductive description of $\dhat$
that $12\ldots n\in\Self_d$ for each $n,d\ge 0$. That is,
$$
\{ 12\ldots n\mid n\ge 0\}\subseteq\bigcap_{d\ge 0}\Self_d.
$$
Conversely, suppose that $\al\in\Self_d$ for each $d\ge 0$.
Note first that necessarily $\al\in\I$.
For a contradiction, suppose that $\al\in\I$ is not an increasing
permutation; equivalently, let $\al=a_1\ldots a_n$ and suppose that there
is some $i\in[n-1]$ such that $a_i\ge a_{i+1}$. Note that
$i+1\in\Asc_n\al=[n]$ and $a_i\ge a_{i+1}$. Therefore, the entry $a_i$
is increased by one under the action of the map $\khat{n}$, which contradicts
$\al\in\Self_n$.
\end{proof}

Given $\al\in\I$, let us define 
the set
$$
H(\al)=\{\dhat(\al)\mid d\ge 0\text{ and }\al\in\dA\}
$$
of all the $d$-hats of~$\al$.
We~\cite[Corollary 4.5]{CCS:mod} proved that $H(\al)$ is finite by showing
that $\dhat(\al)=\khat{n}(\al)$ for each $\al\in\I$ and $d\ge n$,
where $n$ is the length of $\al$.
As an example, consider the inversion sequence $\al=11312$.
We see that $\al\in\kAscseq{d}$ if and only if $d\ge 1$. Further,
we have $\khat{1}(\al)=\khat{2}(\al)=31412$ and
$\khat{d}(\al)=43512$ for $d\ge 3$, hence $H(\al)=\{31412,43512\}$.

A seemingly more general notion of self-modified sequence arises
by defining an inversion sequence $\al$ to be
\emph{self-modified} if $\al\in H(\al)$. The set $\Selfinv$ of
\emph{self-modified inversion sequences} is defined accordingly as
$$
\Selfinv=\{\al\in\I\mid\al\in H(\al)\}.
$$
It turns out that self-modified inversion sequences coincide
with self-modified ascent sequences. Indeed, it is easy to see that
\begin{equation}\label{eq_invself}
\Selfinv=\bigcup_{d\ge 0}\Self_d=\Self_0,
\end{equation}
where the last equality follows by item~(a) of Theorem~\ref{dself_zero}.

To end this section, we wish to provide two alternative characterizations
of $\Self_0$ (see equations~\eqref{altchar1} and~\eqref{altchar2}).
Recall that an endofunction $\al=a_1\ldots a_n$
is a \emph{restricted growth function} if $a_1=1$ and
$$
a_{i+1}\le 1+\max(a_1\ldots a_i)
$$
for each $i\in[n-1]$.
We let
$$
\RGF =\{\al \mid \text{$\al$ is a restricted growth function}\}.
$$
Note that $\RGF\subseteq\kAscseq{0}$.
There is a standard bijection~\cite{Sagan:rgf} between RGFs and set partitions of $[n]$
where the leftmost copies correspond to the minima of the blocks (nonempty subsets of $[n]$).
Next, we show that restricted growth functions 
are Cayley permutations whose leftmost
copies appear in increasing order.

\begin{lemma}\label{RGF_char}
We have
$$
\RGF=\{\al\in\Cay\mid\nub\al=\ltrmax\al\}.
$$
\end{lemma}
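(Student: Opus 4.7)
The plan is to prove both inclusions separately, using the defining growth property of RGFs in one direction and a minimal counterexample in the other.

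For the forward inclusion $\RGF \subseteq \{\al \in \Cay \mid \nub\al = \ltrmax\al\}$, I would first show by a short induction on $i$ that if $\al = a_1 \ldots a_n \in \RGF$, then $\{a_1,\ldots,a_i\} = [\max(a_1\ldots a_i)]$ for every $i$; this uses $a_1 = 1$ and the growth bound $a_{i+1} \le 1+\max(a_1\ldots a_i)$ to see that any new value must equal the current max plus one. In particular $\img\al = [\max\al]$, so $\al \in \Cay$. Now I need $\nub\al = \ltrmax\al$. The inclusion $\ltrmax\al \subseteq \nub\al$ is general (a strictly larger value cannot have appeared before), and holds by~\eqref{eq_lrmax}. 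For $\nub\al \subseteq \ltrmax\al$, suppose $i \in \nub\al$. Then $a_i \notin \{a_1,\ldots,a_{i-1}\} = [\max(a_1\ldots a_{i-1})]$, forcing $a_i > \max(a_1\ldots a_{i-1})$, so $i \in \ltrmax\al$.

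For the reverse inclusion, let $\al \in \Cay$ with $\nub\al = \ltrmax\al$. To see $a_1 = 1$: since $\al \in \Cay$, the value $1$ occurs, so its leftmost occurrence lies in $\nub\al = \ltrmax\al$; but $1$ can only be a left-right maximum at position $1$ (any earlier entry would necessarily be $\ge 1$), so $a_1 = 1$. Next, suppose for contradiction that the growth condition fails at some index, and pick the smallest $i$ with $a_{i+1} \ge M+2$, where $M = \max(a_1\ldots a_i)$. Since $\al \in \Cay$ and $M+1 \le \max\al$, the value $M+1$ appears in $\al$; let $j$ be its first occurrence. Then $j > i$ because $M+1 > M$, and $j \neq i+1$ because $a_{i+1} \neq M+1$, so $j \ge i+2$. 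Then $j \in \nub\al = \ltrmax\al$, which forces $a_j > a_{i+1}$; but $a_j = M+1 < M+2 \le a_{i+1}$, a contradiction.

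I do not expect a real obstacle here: both directions reduce to the observation that, under the RGF growth rule, the set of values used so far is always an initial segment $[M]$, so ``new value'' and ``new maximum'' coincide. The only subtlety is handling $a_1 = 1$ carefully in the reverse direction, which is why I would extract it as a separate short argument before turning to the growth bound.
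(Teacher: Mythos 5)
Your proof is correct, and it takes a genuinely different route from the paper's. The paper proves both inclusions by induction on the length of $\al$, writing $\al=\be a$ and analysing how $\img$, $\nub$ and $\ltrmax$ change when the last letter is appended (and it leaves the subcase $n\notin\nub\al$ of the converse to the reader). You instead argue globally: for the forward inclusion you establish the invariant that every prefix of an RGF has image $[\max(a_1\ldots a_i)]$, from which $\al\in\Cay$ and $\nub\al\subseteq\ltrmax\al$ both drop out immediately (the reverse containment $\ltrmax\al\subseteq\nub\al$ being the general fact~\eqref{eq_lrmax}); for the converse you first pin down $a_1=1$ from the hypothesis and then derive a contradiction from a violation of the growth bound by locating, via the Cayley property, the leftmost occurrence of the skipped value $M+1$ and observing it cannot be a left-to-right maximum. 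Both arguments rest on the same underlying observation -- that under the RGF rule ``new value'' and ``new maximum'' coincide -- but yours avoids the last-letter case analysis and is fully spelled out where the paper's converse is only sketched; the paper's induction, on the other hand, matches the recursive last-letter decompositions used throughout the rest of the article. One cosmetic remark: the minimality of the index $i$ in your contradiction argument is never used, so you could drop it.
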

\begin{proof}
We use induction on the length~$n$ of $\al$, where the cases $n=0$
and $n=1$ are trivial.
Let $n\ge 2$ and let $\al=a_1\ldots a_{n-1}a=\be a$, where
$\be=a_1\ldots a_{n-1}$ and $a\in [n]$. Initially, suppose
that $\al\in\RGF$. 

We first show that $\al\in\Cay$. By definition of RGF we have
$\be\in\RGF$. So by induction $\be\in\Cay$, say with image $[k]$. If
$a\le k$ then $\al$ has the same image. Otherwise the RGF condition
forces $a=k+1$ and $\al$ has image $[k+1]$. The proof that
$\nub\al=\ltrmax\al$ is similar: when $a\le k$ then both $\nub$ and
$\ltrmax$ do not change in passing from $\be$ to $\al$. And if $a=k+1$
then $n$ is added to both sets.

On the other hand, suppose that $\al\in\Cay$ and $\nub\al=\ltrmax\al$.
We show that $\al\in\RGF$. If $n\in\nub\al=\ltrmax\al$, then
\begin{align*}
[\max\al]&=\img\al
  && \text{(since $\al\in\Cay$)}\\
&=\img\be\uplus\{a\}
  && \text{(since $n\in\nub\al$)}
\end{align*}
and also $a=\max\al$ since $n\in\ltrmax\al$. Therefore, we have
$a=1+\max\be$ and $\img\be=[a-1]$. In particular, $\be\in\Cay$
and
$$
\nub\be=\nub\al\cap[n-1]=\ltrmax\al\cap[n-1]=\ltrmax\be.
$$
By induction, we have $\be\in\RGF$, and $\al\in\RGF$ follows as
well since $a=1+\max\be$. The case where
$n\notin\nub\al=\ltrmax\al$ can be proved in a similar fashion,
and we leave the details to the reader.
\end{proof}

Next we characterize self-modified $d$-ascent sequences as those
modified $d$-ascent sequences that are restricted growth functions.

\begin{proposition}\label{selfmod_rgf_char}
For each $d\ge 0$, we have
$$
\Self_d=\kModasc{d}\cap\RGF.
$$
\end{proposition}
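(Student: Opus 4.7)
The plan is to split the equality into the two inclusions and chain together results already established in the section. The key engines are Theorem~\ref{dself_char}, which identifies $\Self_d$ with $\dA\cap\dModasc$; Proposition~\ref{selfmod_iff}, which gives the equalities $\dAsc\al=\nub\al=\ltrmax\al$ whenever $\al\in\Self_d$; and Lemma~\ref{RGF_char}, which describes $\RGF$ as the set of Cayley permutations whose leftmost copies appear in increasing order.

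For the inclusion $\Self_d\subseteq\kModasc{d}\cap\RGF$, I would start from $\al\in\Self_d$. The identity $\al=\dhat(\al)$ places $\al$ in $\kModasc{d}$ immediately. To show $\al\in\RGF$ I would invoke Lemma~\ref{RGF_char}, which requires two things: that $\al$ is a Cayley permutation and that $\nub\al=\ltrmax\al$. The first follows by writing $\al=\dhat(\al)$ and applying Proposition~\ref{misc}(a). The second is exactly the content of Proposition~\ref{selfmod_iff}(d,e), which guarantees $\ltrmax\al=\dAsc\al=\nub\al$ for $\al\in\Self_d$.

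For the reverse inclusion $\kModasc{d}\cap\RGF\subseteq\Self_d$, I would take $\al\in\kModasc{d}\cap\RGF$ and observe that $\al\in\RGF\subseteq\kAscseq{0}\subseteq\dA$, using the inclusion $\RGF\subseteq\kAscseq{0}$ noted just before Lemma~\ref{RGF_char} together with the chain of inclusions~\eqref{eq_chain}. Combined with $\al\in\dModasc$, Theorem~\ref{dself_char} then yields $\al\in\dA\cap\dModasc=\Self_d$.

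There is no real obstacle; the proposition is essentially a repackaging of the machinery developed earlier in the section. The only care needed is in the bookkeeping of which prior results supply each of $\al\in\Cay$ and $\nub\al=\ltrmax\al$ in the forward direction, and in citing the correct inclusion $\RGF\subseteq\dA$ in the reverse direction.
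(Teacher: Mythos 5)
Your proposal is correct and follows essentially the same route as the paper: the forward inclusion via $\al=\dhat(\al)$, Proposition~\ref{misc}(a), Proposition~\ref{selfmod_iff}, and Lemma~\ref{RGF_char}; the reverse via $\RGF\subseteq\kAscseq{0}\subseteq\dA$ and Theorem~\ref{dself_char}. No gaps.
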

\begin{proof}
Let us start with the inclusion
$\Self_d\subseteq\kModasc{d}\cap\RGF$. Let
$\al\in\Self_d$. Then $\al=\khat{d}(\al)\in\kModasc{d}$ and~$\al$ is a
Cayley permutation by item~(a) of Proposition~\ref{misc}. Furthermore,
by Proposition~\ref{selfmod_iff},
$$
\ltrmax\al=\dAsc\al=\nub\al,
$$
hence $\al\in\RGF$ follows by Lemma~\ref{RGF_char}.

To prove the remaining inclusion, recall that $\RGF\subseteq\kAscseq{0}$.
Thus
$$
\kModasc{d}\cap\RGF
\subseteq\kModasc{d}\cap\kAscseq{0}
\subseteq\kModasc{d}\cap\kAscseq{d}
=\Self_d,
$$
where the last equality is Theorem~\ref{dself_char}.
\end{proof}

Letting $d=0$ in Proposition~\ref{selfmod_rgf_char} yields
\begin{equation}\label{altchar1}
\Self_0=\kModasc{0}\cap\RGF.
\end{equation}

An alternative description of $\Self_0$ is
showed in the next result.

\begin{corollary}\label{invself_char}
We have
\begin{equation}\label{altchar2}
\Self_0=\Modinv\cap\RGF.
\end{equation}

\end{corollary}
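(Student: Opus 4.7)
The plan is to deduce the claim directly from equation~\eqref{altchar1}, the definition~\eqref{eq_def_modinv} of $\Modinv$, Proposition~\ref{selfmod_rgf_char}, and the inclusion $\Self_d\subseteq\Self_0$ from Theorem~\ref{dself_zero}(a). No new combinatorial argument is needed; the work is just to chain the right inclusions in both directions.

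For the forward inclusion, I would start from equation~\eqref{altchar1}, which gives $\Self_0=\kModasc{0}\cap\RGF$. Since $\kModasc{0}\subseteq\bigcup_{d\ge 0}\kModasc{d}=\Modinv$, intersecting with $\RGF$ immediately yields $\Self_0\subseteq\Modinv\cap\RGF$.

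For the reverse inclusion, take any $\al\in\Modinv\cap\RGF$. By definition of $\Modinv$, there exists some $d\ge 0$ such that $\al\in\kModasc{d}$. Combined with $\al\in\RGF$, Proposition~\ref{selfmod_rgf_char} applied at this value of $d$ gives $\al\in\Self_d$. Theorem~\ref{dself_zero}(a) then forces $\al\in\Self_0$, establishing $\Modinv\cap\RGF\subseteq\Self_0$.

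The only subtlety — and the ``main obstacle,'' if there is one — is locating, rather than proving, the correct stepping stones: one has to notice that $\RGF$ supplies the Cayley/left-right-maximum structure that Proposition~\ref{selfmod_rgf_char} needs on the nose, while the $\Modinv$ membership produces the $d$ that Proposition~\ref{selfmod_rgf_char} is indexed by. Once the right $d$ is chosen, Theorem~\ref{dself_zero}(a) collapses everything down to $\Self_0$, so no further bookkeeping is required.
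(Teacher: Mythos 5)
Your proof is correct and rests on exactly the same ingredients as the paper's: Proposition~\ref{selfmod_rgf_char}, the inclusion $\Self_d\subseteq\Self_0$ from Theorem~\ref{dself_zero}(a) (which the paper packages as $\Self_0=\bigcup_{d\ge 0}\Self_d$ in equation~\eqref{eq_invself}), and the definition~\eqref{eq_def_modinv} of $\Modinv$. The paper merely writes the argument as a single chain of set equalities, distributing the union over the intersection with $\RGF$, where you split it into two inclusions; the content is the same.
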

\begin{proof}
We have
\begin{align*}
\Self_0
&=\bigcup_{d\ge 0}\Self_d
  && \text{(by equation~\eqref{eq_invself})}\\
&=\bigcup_{d\ge 0}\left(\kModasc{d}\cap\RGF\right)
  && \text{(by Proposition~\ref{selfmod_rgf_char})}\\
&=\bigl(\bigcup_{d\ge 0}\kModasc{d}\bigr)\cap\RGF\\
&=\Modinv\cap\RGF
  && \text{(by equation~\eqref{eq_def_modinv})},
\end{align*}
finishing the proof.
\end{proof}

Theorem~\ref{dself_char} characterizes self-modified $d$-ascent
sequences as $\Self_d=\dA\cap\dModasc$. It is easy to see that
the inclusion $\Selfinv\subseteq\I\cap\Modinv$ holds as well. Indeed,
using equation~\eqref{eq_invself},
$$
\Selfinv=\Self_0=\kAscseq{0}\cap\kModasc{0}
\subseteq\I\cap\Modinv.
$$
However, the opposite inclusion does not hold. For instance,
let us consider again the inversion sequence $\al=11312$.
Here, $\al=\khat{0}(11212)$ is a member of $\I\cap\Modinv$.
On the other hand, we have seen that $H(\al)=\{31412,43512\}$,
thus $11312\notin H(11312)$ and hence $11312\notin \Selfinv$.

We end this section with one more remark. Self-modified inversion
sequences are related to restricted growth functions through
$\Selfinv=\Modinv\cap\RGF$. But the inclusion $\RGF\subseteq\Modinv$ does
not hold: $1212\in \RGF$, but $1212\notin\Modinv$. One way to obtain an equality
would be to alter the recursive definition of $\dModasc$ given in
Section~\ref{section_intro} by allowing $d=-\infty$ and letting the last
letter~$a$ be chosen in the interval $[1+\max\be]$. This would give
$\kModasc{-\infty}=\RGF$.
We will keep the standard definition when $d$ is finite so as to correspond more closely to  the other literature on this topic and
 leave the direction  discussed above open for a future investigation.

\section{Enumeration of self-modified \texorpdfstring{$d$}{d}-ascent sequences}\label{section_enumdself}

We aim to determine the generating function
\[
  \Self_d(q,x) = \sum_{\alpha}q^{\max(\alpha)}x^{|\alpha|},
\]
where the sum ranges over all self-modified $d$-ascent sequences. In
particular, the coefficient of $x^n$ in $\Self_d(1,x)$ is
$|\Self_{d,n}|$, the number of self-modified $d$-ascent sequences of
length $n$. One reason it is natural to refine this count by the
distribution of $\max$ is that, by Proposition~\ref{selfmod_rgf_char},
any self-modified $d$-ascent sequence is a restricted growth function
and $\max$ gives the number of blocks in the corresponding set
partition. For instance, the first few terms for $d=2$ are
\[
  \Self_2(q,x) =
  \begin{aligned}[t]
    1 &\,+\, qx \,+\, q^{2}x^2 \,+\, q^{3}x^3 \,+\, (q^{3} +  q^{4})x^4 \\
      &\,+\, (3 q^{4} +  q^{5})x^5 \\
      &\,+\, (2 q^{4} + 6 q^{5} +  q^{6})x^6 \\
      &\,+\, (12 q^{5} + 10 q^{6} +  q^{7})x^7 \\
      &\,+\, (9 q^{5} + 39 q^{6} + 15 q^{7} +  q^{8})x^8 \\
      &\,+\, (2 q^{5} + 75 q^{6} + 95 q^{7} + 21 q^{8} +  q^{9})x^9 \,+\, \cdots
  \end{aligned}
\]
We shall derive an explicit expression for $\Self_d(q,x)$, for any
$d\geq 0$, in terms of certain Fibonacci polynomials which we introduce
below.

The \emph{Fibonacci numbers} $F_n$ are defined by the second order recurrence
relation $F_{n} = F_{n-1} + F_{n-2}$ with initial terms
$F_0=F_1=1$. Many generalizations of the Fibonacci numbers have been
proposed. One may for instance consider the $d$th order recurrence
relation $F_{n} = F_{n-1} + F_{n-d}$ with initial terms
$F_0=F_1=\cdots=F_{d-1}=1$. Or, generalizing in a different direction,
one may consider Fibonacci polynomials such as those given by
$F_0(x)=F_1(x)=1$ and $F_{n}(x) = F_{n-1}(x) + xF_{n-2}(x)$. Combining
these two ideas we define, for any fixed $d\geq 0$, and any integer $n$,
\begin{equation}
\label{Fdnx}
\left\{
\begin{aligned}
  F_{d,n}(x) &= 1 && \text{for $n < d$,} \\
  F_{d,n}(x) &= F_{d,n-1}(x) + xF_{d,n-d}(x) && \text{for $n\geq d$.}
\end{aligned}
\right.
\end{equation}
In generalizations of the Fibonacci sequence, like the one above, the
``smallest'' case typically corresponds to the (classical) Fibonacci
recurrence, which in our definition is $d=2$ (with $x=1$). Note that we,
however, also allow $d=0$ and $d=1$.

If $d=0$, then $F_{0,n}(x) = F_{0,n-1}(x) + xF_{0,n}(x)$, which together
with the initial condition $F_{0,-1}(x)=1$ gives
\[
  F_{0,n}(x) = 1/(1-x)^{n+1}=(1+x+x^2+\cdots)^{n+1}.
\]
In particular, $F_{0,n}(x)$ is a power series, while it is easy to see
that $F_{d,n}(x)$ is a polynomial for any $d\geq 1$. For example, when
$d=1$ we have $F_{1,0}(x)=1$ and $F_{1,n}(x) = (1+x)F_{1,n-1}(x)$ for
$n\geq 1$, and hence
\[
  F_{1,n}(x) = (1+x)^{n}.
\]
For any $d\geq 0$, define the generating function
\[
  F_{d}(x,y) = \sum_{n\geq 0} F_{d,n}(x)y^n.
\]
Using standard techniques for converting recursions into generating functions (see, for example~\cite[Section 3.6]{Sagan:aoc}) it follows from the recurrence relation for
$F_{d,n}(x)$ that
\begin{equation}\label{dfib-gf}
  F_{d}(x,y) = \frac{1}{1-y-xy^d}.
\end{equation}
Viewing this as a geometric series and applying the binomial theorem we
find that
\[
  F_{d}(x,y) = \sum_{m\geq 0}\sum_{k=0}^m\binom{m}{k}x^ky^{(d-1)k+m}
\]
and on extracting the coefficient of $y^n$ we get
\begin{equation*} 
  F_{d,n}(x) = \sum_{k\geq 0} \binom{n-(d-1)k}{k}x^k.
\end{equation*}
For reference, the first few polynomials for $d=2$ are
\begin{align*}
  F_{2,0}(x) &= F_{2,1}(x) = 1; \\
  F_{2,2}(x) &= 1+x; \\
  F_{2,3}(x) &= 1 + 2x; \\
  F_{2,4}(x) &= 1 + 3x + x^2; \\
  F_{2,5}(x) &= 1 + 4x + 3x^2; \\
  F_{2,6}(x) &= 1 + 5x + 6x^2 + x^3.
\end{align*}

Let us write $\mu\vDash n$ to indicate that $\mu$ is an integer
composition of $n$. A well-known interpretation of the $n$th Fibonacci
number, $F_n$, is the number of compositions $\mu\vDash n$ with parts in
$\{1,2\}$. Similarly, for $d\geq 2$, an interpretation of $F_{d,n}(1)$
is the number of compositions $\mu\vDash n$ with parts in $\{1,d\}$, and
the polynomial $F_{d,n}(x)$ records the distribution of $d$-parts in
such compositions. In symbols,
\[
  F_{d,n}(x) = \sum_{\substack{\mu=(m_1,\dots,m_k)\,\vDash\, n\\[0.5ex] m_i\in \{1,d\}}} x^{|\mu|_d},
\]
where $|\mu|_d = \#\{i: m_i = d\}$. This interpretation works for $d=0$
as well; as previously noted $F_{0,n}(x)$ is power series rather than a
polynomial in that case. For $d=1$ a little extra care is needed. For
our combinatorial interpretation to work we must regard the $1$-parts
as being distinct from the $d$-parts, even though $d=1$ in this case.
In other words, $F_{1,n}(x)$ records the
distribution of $1'$-parts in compositions $\mu\vDash n$ with
parts in $\{1,1'\}$, where $1$ and $1'$ denote
two different kinds of parts, both of size $1$.

Compositions with parts in $\{1,d\}$ are closely related to compositions
whose parts are of size at least $d$. To precisely state the relationship,
define
\begin{equation}
\label{KdnxDef}
  K_{d,n}(x) = \sum_{\mu}x^{\ell(\mu)}\quad\text{and}\quad K_d(x,y) = \sum_{n\geq 0} K_{d,n}(x)y^n,
\end{equation}
where the former sum ranges over all integer compositions $\mu$ of $n$
into parts of size $d$ or larger, and $\ell(\mu)$ denotes the number of
parts of $\mu$.
\begin{lemma}\label{lemma-K}
  We have
  \[
    K_d(x,y) = \frac{1-y}{1-y-xy^d}.
  \]
\end{lemma}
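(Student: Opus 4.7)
The plan is to recognize $K_d(x,y)$ as the generating function for sequences (compositions) of parts of size at least $d$, and then apply the standard sequence construction (cf.~\cite[Section 3.6]{Sagan:aoc}).

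First I would compute the generating function of a single allowed part, weighted by $x$ (tracking the one part) and by $y^s$ for size $s\geq d$:
$$
  \sum_{s\geq d} x y^s \;=\; \frac{xy^d}{1-y}.
$$
Since a composition is a (possibly empty) sequence of parts and concatenation translates to multiplication of generating functions, the sequence construction immediately yields
$$
  K_d(x,y) \;=\; \sum_{k\geq 0}\left(\frac{xy^d}{1-y}\right)^{\!k} \;=\; \frac{1}{1 - xy^d/(1-y)} \;=\; \frac{1-y}{1-y-xy^d}.
$$
The geometric-series step is formally valid because $xy^d/(1-y)$ has no constant term.

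As an alternative, one could deduce the formula from equation~\eqref{dfib-gf} by establishing $K_d(x,y) = (1-y)F_d(x,y)$, equivalently $K_{d,n}(x) = F_{d,n}(x) - F_{d,n-1}(x)$ for $n\geq 1$. This would follow bijectively: among compositions of $n$ with parts in $\{1,d\}$, those ending in $1$ are in weight-preserving bijection with those of $n-1$ (delete the trailing $1$, which does not affect the $d$-count), so the $x$-weighted count of those not ending in $1$ is $F_{d,n}(x) - F_{d,n-1}(x)$. Any such composition has the shape $1^{a_0}\,d\,1^{a_1}\,d\cdots 1^{a_{k-1}}\,d$ with $a_i\geq 0$, and it bijects with the composition $(d+a_0,\,d+a_1,\,\ldots,\,d+a_{k-1})$ of $n$ into $k$ parts of size $\geq d$, preserving the statistic $x^k$. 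There is no real obstacle in either proof; the sequence-construction approach is the cleaner of the two and handles all $d\geq 1$ uniformly.
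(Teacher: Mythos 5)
Your main argument is correct and is essentially identical to the paper's proof: both compute the single-part generating function $xy^d/(1-y)$ and apply the sequence (geometric series) construction. The alternative route you sketch is also sound, and in fact mirrors the combinatorial bijection the paper gives later when proving Lemma~\ref{K-F}(a).
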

\begin{proof}
  The generating function for single parts of size at least $d$ is
  $y^d/(1-y)$. Thus
  \begin{align*}
    \sum_{n\geq 0} K_{d,n}(x) y^n
    &= \sum_{k\geq 0} \left(\frac{xy^d}{1-y}\right)^k \\
    &= \frac{1}{1-\cfrac{xy^d}{1-y}} = \frac{1-y}{1-y-xy^d}. \qedhere
  \end{align*}
\end{proof}

\begin{lemma}\label{K-F}
  For any $d\geq 0$, we have
  \begin{enumerate}
  \item [(a)] $F_{d,n}(x) = F_{d,n-1}(x) + K_{d,n}(x)$\, for $n\geq 1$, and
  \item [(b)] $F_{d,n}(x) = K_{d,0}(x)+K_{d,1}(x)+\cdots+K_{d,n}(x)$\, for $n\geq 0$.
  \end{enumerate}
\end{lemma}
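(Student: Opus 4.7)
The plan is to derive both parts from the single generating function identity
\[
  K_d(x,y) \;=\; (1-y)\,F_d(x,y),
\]
which is immediate from comparing the closed forms recorded in equation~\eqref{dfib-gf} and Lemma~\ref{lemma-K}: the denominators agree, and the numerators differ by exactly the factor $1-y$.

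For part (a), I would rewrite this identity as $K_d(x,y) = F_d(x,y) - y\,F_d(x,y)$ and extract the coefficient of $y^n$ on both sides for $n\ge 1$. On the right, the coefficients of $F_d(x,y)$ and $y\,F_d(x,y)$ are $F_{d,n}(x)$ and $F_{d,n-1}(x)$ respectively, and on the left the coefficient is $K_{d,n}(x)$ by definition~\eqref{KdnxDef}. This gives $K_{d,n}(x) = F_{d,n}(x) - F_{d,n-1}(x)$, which is (a) after rearrangement.

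For part (b), the cleanest route is to telescope (a). Summing the identity $F_{d,k}(x) - F_{d,k-1}(x) = K_{d,k}(x)$ over $k = 1,2,\dots,n$ collapses to
\[
  F_{d,n}(x) - F_{d,0}(x) \;=\; \sum_{k=1}^{n} K_{d,k}(x),
\]
and then I would use $F_{d,0}(x) = K_{d,0}(x)$ to absorb the constant term into the sum. For $d\ge 1$ both sides equal $1$ directly from the definitions, while for $d=0$ both equal $1/(1-x)$, as one checks by reading off the coefficient of $y^0$ in each closed form. Alternatively, (b) can be obtained in one stroke by viewing $F_d(x,y) = K_d(x,y)/(1-y)$ and recognising division by $1-y$ as the operation of taking partial sums of coefficients; the coefficient of $y^n$ on the right is then exactly $K_{d,0}(x) + K_{d,1}(x) + \cdots + K_{d,n}(x)$.

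There is no real obstacle here: the work is pure algebraic manipulation of rational generating functions together with coefficient extraction. The only minor point to watch is the uniform verification of the base case $F_{d,0}(x) = K_{d,0}(x)$ across the three cases $d=0$, $d=1$, and $d\ge 2$, but this is immediate in each case from the explicit formulas already on the table.
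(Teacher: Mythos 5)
Your proof is correct and follows essentially the same route as the paper's: both derive $K_d(x,y)=(1-y)F_d(x,y)$ by comparing the closed forms from Lemma~\ref{lemma-K} and equation~\eqref{dfib-gf}, obtain (a) by coefficient extraction, and get (b) by telescoping (a). Your explicit verification of the base case $F_{d,0}(x)=K_{d,0}(x)$ is a detail the paper leaves implicit, and it is checked correctly in all three regimes of $d$.
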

\begin{proof}
  Identity (b) is obtained by repeated application of identity (a), so
  let us focus on (a). An immediate consequence of Lemma~\ref{lemma-K}
  and identity~\eqref{dfib-gf} is
  \begin{equation*}
    K_{d}(x,y) = (1-y)F_{d}(x,y),
  \end{equation*}
  from which (a) follows by identifying coefficients. 
\end{proof}

While the previous proof settles the claimed identity, let us also
provide a, perhaps more elucidating, combinatorial proof of~(a).
Assume $n\geq 1$ and let
$\mu=(m_1,m_2,\dots,m_k)\vDash n$ be such that $m_i\in \{1,d\}$ for
each $i\in[k]$. If $m_k=1$ then we map $\mu$ to the composition
$\mu'=(m_1,m_2,\dots,m_{k-1})\vDash n-1$ obtained from $\mu$ by
removing its last part. This procedure is trivially reversible. If
$m_k=d$, then we need to map $\mu$ (in a reversible way) to a
composition $\nu$ with parts of size at least $d$. Moreover, $\nu$
should have as many parts as $\mu$ has $d$-parts. Having spelled out
these criteria, the map now presents itself: Assume that $m_i$ is the
first $d$-part of $\mu$ (which exists, since $m_k=d$); that is,
$m_1=\cdots=m_{i-1}=1$ and $m_i=d$. We simply sum these up to get the
first part $n_1= i-1+d$ of $\nu$; the second part $n_2$ is obtained by
applying the same procedure to $(m_{i+1},m_{i+2},\dots,m_k)$; and so on.
For instance, $\mu=(1,3,3,1,1,3)$, where $d=3$ and $n=12$, gets mapped
to $\nu = (4,3,5)$.

In order to prove the expression for $\tilde{A}_d(q,x)$ in Theorem~\ref{self-gf} below, we will need the following definition
and characterization of the elements of $\dA$ which are self-modified.
Let us say that a sequence of numbers $c_1c_2\dots c_{\ell}$ is
\emph{decreasing with pace $d$} if the difference between consecutive
elements is at least $d$; that is, if
$c_j-c_{j+1} \geq d$ for $1\leq j<\ell$.

\begin{lemma}\label{self-mod-factoring}
  Any $\alpha\in\dA$ is self-modified if and only if it can be written
  \[
    \alpha = 1B_12B_2\ldots kB_k,
  \]
  where $k=\max\alpha$ and each factor $iB_i$ is decreasing with pace $d$.
\end{lemma}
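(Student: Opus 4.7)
The plan is to invoke Proposition~\ref{selfmod_iff}, which asserts that $\alpha\in\Self_d$ is equivalent to the equality $\dAsc\alpha=\nub\alpha=\ltrmax\alpha$, and then to translate this set-theoretic equality into the desired structural statement about blocks. The bulk of the work is bookkeeping: identifying the block heads with $\nub\alpha$ and $\dAsc\alpha$ simultaneously.

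For the forward direction, assume $\alpha\in\Self_d$ and set $k=\max\alpha$. By Proposition~\ref{selfmod_rgf_char}, $\alpha$ is an RGF, so in particular Cayley with image $[k]$, and Proposition~\ref{selfmod_iff} gives $\dAsc\alpha=\nub\alpha=\ltrmax\alpha$. Let $p_j$ denote the position of the leftmost copy of $j$. Since $p_j\in\ltrmax\alpha$, every entry strictly to the left of $p_j$ is less than $j$; in particular $j+1$ has not yet appeared, so $p_1<p_2<\cdots<p_k$. This lets us factor
\[
  \alpha = 1B_1\,2B_2\,\cdots\,kB_k,
\]
where $iB_i$ occupies positions $p_i,p_i+1,\dots,p_{i+1}-1$ (with $p_{k+1}:=|\alpha|+1$). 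Any position $t$ with $p_i<t<p_{i+1}$ lies outside $\nub\alpha=\dAsc\alpha$, so by definition of a $d$-ascent $\alpha(t)\leq\alpha(t-1)-d$. Hence $iB_i$ is decreasing with pace $d$.

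For the converse, let $\alpha\in\dA$ admit such a factorization with $k=\max\alpha$, and let $P=\{p_1,\dots,p_k\}$ be the positions of the block heads. By Proposition~\ref{selfmod_iff} it suffices to prove $\dAsc\alpha=\nub\alpha=P$. The pace condition on $iB_i$ forces every entry $b$ of $B_i$ to satisfy $b\leq i-d$ when $d\geq 1$ and $b\leq i$ when $d=0$; in either case $b<j$ whenever $i<j$, so no $j$ can appear before block $jB_j$, giving $\nub\alpha=P$. Next, every position $t$ strictly inside some $B_i$ satisfies $\alpha(t)\leq\alpha(t-1)-d$ and so fails to be a $d$-ascent; every head $p_i$ with $i\geq 2$ is preceded by an entry $v\leq i-1$ (the last entry of the preceding block), so $i>v-d$ and thus $p_i\in\dAsc\alpha$; and $p_1=1$ is a $d$-ascent by definition. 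Combining these yields $\dAsc\alpha=P=\nub\alpha$, hence $\alpha\in\Self_d$.

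The main obstacle is purely organizational rather than conceptually deep: verifying in parallel that the block heads account both for $\nub\alpha$ and for $\dAsc\alpha$, while handling the slightly weaker bound on the entries of $B_i$ in the case $d=0$ (where pace-$0$ decrease means mere weak decrease) uniformly with the case $d\geq 1$. The key observation that makes both cases go through is simply that entries of $B_i$ never reach any value $j>i$, so leftmost copies of higher values cannot appear inside an earlier block.
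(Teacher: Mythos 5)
Your proof is correct and follows essentially the same route as the paper: both directions reduce to the equivalence $\alpha\in\Self_d \Leftrightarrow \dAsc\alpha=\nub\alpha$ from Proposition~\ref{selfmod_iff}, combined with the RGF factorization from Proposition~\ref{selfmod_rgf_char}. The only difference is that you spell out the verification of $\nub\alpha=\dAsc\alpha=P$ in the converse, which the paper asserts in a single sentence.
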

\begin{proof}
Let $\alpha\in\Self_d$ with $\max \alpha = k$ be given. 
By Proposition~\ref{selfmod_rgf_char}, $\al$ is an RGF and so can be written
$$
\alpha = 1B_12B_2\ldots kB_k
$$
where $\nub\al$ is the set of positions of the elements $1,2,\ldots,k$ which are not in the $B_i$.
But from Proposition~\ref{selfmod_iff} we have $\dAsc\al=\nub\al$. Thus each factor $iB_i$ must be
void of $d$-ascents; that is, each $iB_i$ is decreasing with pace $d$.

  Conversely, if $\alpha = 1B_12B_2\ldots kB_k\in\kAscseq{d}$, where $k=\max\alpha$
  and each factor $iB_i$ is decreasing with pace $d$, then
  $\nub\alpha =\dAsc\alpha$ and hence $\alpha$ is
  self-modified by Proposition~\ref{selfmod_iff}.
\end{proof}

The last ingredient needed to state Theorem~\ref{self-gf} is an analogue of the factorial function for Fibonacci numbers and polynomials.
The $n$th \emph{Fibonacci factorial}, also called \emph{Fibonorial} or
\emph{Fibotorial}, is defined by
\[
  F^{!}_n = F_1F_2\cdots F_n.
\]
In this manner we also define
\[
  F^{!}_{d,n}(x) = \prod_{i=0}^n F_{d,i}(x).
\]
Note that the index $i$ ranges from 0 to $n$ rather than from 1 to
$n$. This only makes a difference when $d=0$ since $F_{0,0}(x)=1/(1-x)$
while $F_{d,0}(x)=1$ for $d\geq 1$. In particular,
\[
  F^{!}_{0,n}(x)
  = \left(\frac{1}{1-x}\right)^{\binom{n+2}{2}}
\quad\text{and}\quad
  F^{!}_{1,n}(x) = (1+x)^{\binom{n+1}{2}}.
\]
For $d\geq 2$ we do not have such simple formulas. As an illustration,
the first few polynomials for $d=2$ are
\begin{align*}
  F^{!}_{2,0}(x) &= F^{!}_{2,1}(x) = 1;\\
  F^{!}_{2,2}(x) &= 1 + x;\\
  F^{!}_{2,3}(x) &= 1 + 3x + 2x^2;\\
  F^{!}_{2,4}(x) &= 1 + 6x + 12x^2 + 9x^3 + 2x^4;\\
  F^{!}_{2,5}(x) &= 1+ 10x + 39x^2 + 75x^3 + 74x^4 + 35x^5 + 6x^6.
\end{align*}
For any $d\geq 0$, define the generating function
\[
  F^{!}_{d}(x,y) = \sum_{n\geq 0} F^{!}_{d,n}(x)y^n.
\]
The following theorem reveals a striking relationship between
Fibonacci factorials and the number of self-modified $d$-ascent
sequences. Recall the generating function
$\Self_d(q,x) = \sum_{\alpha}q^{\max(\alpha)}x^{|\alpha|}$, where the
sum ranges over all $\al\in\Self_d$.

\begin{theorem}\label{self-gf}
  For any $d\geq 0$,
  \[
    \Self_d(q,x) = 1 + qxF^{!}_d(x,qx).
  \]
\end{theorem}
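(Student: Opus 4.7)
The plan is to combine the factorization from Lemma~\ref{self-mod-factoring} with a generating function calculation for each factor. First I would use Lemma~\ref{self-mod-factoring} to write every nonempty $\alpha\in\Self_d$ uniquely as $\alpha=1B_12B_2\cdots kB_k$ with $k=\max\alpha$ and each $iB_i$ decreasing with pace $d$. The conditions on $B_i$ are then purely local: $B_i$ is an arbitrary sequence of positive integers whose first entry is at most $i-d$ and whose consecutive entries drop by at least $d$. A quick consistency check, which I would include, shows that any such concatenation is automatically in $\dA$: the leading letters $1,2,\ldots,k$ are the only $d$-ascents, each occurs where the ascent counter has just reached its previous value, and entries inside $B_i$ are bounded by $i-d\le 1+\asc_d$ of the prefix.

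Next I would introduce the auxiliary generating function
\[
  h_c(x)=\sum_{B}x^{|B|},
\]
where $B$ ranges over sequences of positive integers that are decreasing with pace $d$ and whose first entry is $\le c$ (the empty sequence included). Splitting by whether the first entry equals $c$ or is at most $c-1$ gives the recurrence $h_c(x)=h_{c-1}(x)+xh_{c-d}(x)$ for $c\ge 1$, with $h_c(x)=1$ for $c\le 0$. Comparing with the defining recursion~\eqref{Fdnx} I would conclude, by induction on $c$, that
\[
  h_c(x)=F_{d,c+d-1}(x).
\]
Since the generating function for $B_i$ is exactly $h_{i-d}(x)$, this yields
\[
  \sum_{B_i}x^{|B_i|}=F_{d,i-1}(x).
\]

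Finally, I would assemble the factors. Since the choices of $B_1,B_2,\ldots,B_k$ are independent and the $k$ mandatory leading letters contribute $x^k$, summing over $k\ge 0$ gives
\[
  \Self_d(q,x)=1+\sum_{k\ge 1}q^k x^k\prod_{i=1}^k F_{d,i-1}(x)
  =1+\sum_{k\ge 1}(qx)^k F^{!}_{d,k-1}(x),
\]
and reindexing the last sum as $\sum_{n\ge 0}(qx)^{n+1}F^{!}_{d,n}(x)=qx\,F^{!}_d(x,qx)$ gives the claimed identity. The one step where I expect to have to be careful is matching the index shift between $h_c$ and $F_{d,n}$ (checking that the initial-condition range $c\le 0$ translates to $n<d$ and that the recurrence step $c\ge 1$ translates to $n\ge d$); everything else is either the factorization lemma or a routine bookkeeping of the product over $i$.
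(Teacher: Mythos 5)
Your proof is correct, and it shares the paper's skeleton — the factorization $\alpha=1B_12B_2\cdots kB_k$ from Lemma~\ref{self-mod-factoring}, independence of the blocks, and the product $\prod_{i=1}^k xF_{d,i-1}(x)$ — but it derives the block generating function by a genuinely different route. The paper encodes each factor $iB_i$ as a composition $\mu\vDash i$ via consecutive differences, counts the choices by $xK_{d,0}(x)+\cdots+xK_{d,i-1}(x)$ using the polynomials $K_{d,n}(x)$ for compositions into parts of size at least $d$, and then invokes Lemma~\ref{K-F}(b) to collapse that sum to $xF_{d,i-1}(x)$. You instead introduce $h_c(x)$ for pace-$d$ decreasing sequences with first entry at most $c$, condition on whether the first entry equals $c$ to get $h_c=h_{c-1}+xh_{c-d}$ with $h_c=1$ for $c\le 0$, and match this against the defining recursion~\eqref{Fdnx} to get $h_{i-d}(x)=F_{d,i-1}(x)$; the index bookkeeping you flag does check out ($c\le 0$ corresponds to $n=c+d-1<d$ and $c\ge1$ to $n\ge d$). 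Your route is more self-contained, bypassing Lemmas~\ref{lemma-K} and~\ref{K-F} entirely, whereas the paper's detour through compositions buys the combinatorial interpretation of $F_{d,n}(x)$ and $K_{d,n}(x)$ that it exploits elsewhere. One small point in your favor: you explicitly verify that every concatenation of pace-$d$ decreasing factors lies in $\dA$ (hence in $\Self_d$), a converse that the paper leaves implicit in its appeal to Lemma~\ref{self-mod-factoring} when summing over all choices of blocks; your sketch of that check (the leading letters $1,\ldots,k$ are exactly the $d$-ascents, and each entry of $B_i$ is at most $i-d\le 1+\asc_d$ of the preceding prefix) is accurate.
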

\begin{proof}
  Let $\alpha\in\Self_d$. In accordance with
  Lemma~\ref{self-mod-factoring}, write $\alpha = 1B_12B_2\ldots kB_k$,
  in which $k=\max\alpha$ and each factor $iB_i$ is decreasing with pace
  $d$. Fixing $i\in [k]$, assume that the length of $iB_i$ is $\ell$ and write
  \[
    iB_i=c_1c_2\dots c_{\ell}.
  \]
  Let $m_j=c_j-c_{j+1}$ for $j\in [\ell-1]$ and let $m_{\ell}=c_{\ell}$.
  Note that $iB_i=c_1c_2\dots c_{\ell}$ can be recovered from the composition
  \[
    \mu = (m_1,m_2,\dots,m_{\ell}) \vDash i
  \]
  by letting $c_j=m_j+\cdots+m_{\ell}$. In other words, $\mu$
  encodes the factor $iB_i$, and the length of $iB_i$ equals the number of parts of $\mu$.
  If we exclude the last part of $\mu$ and
  let $\mu'=(m_1,\dots,m_{\ell-1})$, then we have a composition each
  part of which is at least $d$, and the
  possible choices for $\mu'$ are recorded by $K_{d,m}(x)$ as defined 
in~\eqref{KdnxDef}, where
  $m=m_1+\cdots+m_{\ell-1}$.
  The last part, $m_{\ell}$, of $\mu$ can be
  any integer between $1$ and $i$ and hence the possible choices for
  $\mu$ are recorded by
  \[
    xK_{d,0}(x) + xK_{d,1}(x) + \cdots + xK_{d,i-1}(x).
  \]
  By item~(b) of Lemma~\ref{K-F}, this expression simplifies to $xF_{d,i-1}(x)$ and
  hence the generating function for the set of $\alpha\in\Self_d$ with
  $k=\max\alpha$ is
  \[
    q^k\prod_{i=1}^k xF_{d,i-1}(x) = q^kx^k F^{!}_{d,k-1}(x).
  \]
  The claim now follows by summing over $k$.
\end{proof}

Theorem~\ref{self-gf} allows us to easily tabulate the cardinalities of
$\Self_{d,n}$; see Table~\ref{table-selfmod}.
\begin{table}
\[
\begin{array}{c|rrrrrrrrrrrrr}
  d\hspace{2pt}\backslash\hspace{1pt} n \rule[-0.9ex]{0pt}{0pt}
  & 0 & 1 & 2 & 3 & 4 & 5 & 6 & 7 & 8 & 9 & 10 & 11 & 12 \\
  \hline \rule{0pt}{2.6ex}
  0 & 1 & 1 & 2 & 5 & 14 & 43 & 143 & 510 & 1936 & 7775 & 32869 & 145665 & 674338 \\
  1 & 1 & 1 & 1 & 2 & 4 & 10 & 27 & 81 & 262 & 910 & 3363 & 13150 & 54135 \\
  2 & 1 & 1 & 1 & 1 & 2 & 4 & 9 & 23 & 64 & 194 & 629 & 2177 & 7982 \\
  3 & 1 & 1 & 1 & 1 & 1 & 2 & 4 & 9 & 22 & 58 & 167 & 515 & 1698 \\
  4 & 1 & 1 & 1 & 1 & 1 & 1 & 2 & 4 & 9 & 22 & 57 & 158 & 467 \\
  5 & 1 & 1 & 1 & 1 & 1 & 1 & 1 & 2 & 4 & 9 & 22 & 57 & 157 \\
  6 & 1 & 1 & 1 & 1 & 1 & 1 & 1 & 1 & 2 & 4 & 9 & 22 & 57 \\
\end{array}
\]
\caption{The number of self-modified $d$-ascent sequences of length $n$}
\label{table-selfmod}
\end{table}

For $d=0$ and $d=1$ we even get explicit general expressions. For $d=0$
we rediscover a formula originally given by Bousquet-Mélou~{\etal}~\cite{BMCDK:2fp}:
\begin{align*}
  \Self_0(q,x)
  &= 1 + qxF^{!}_0(x,qx) \\
  &= 1 + \sum_{k\geq 1}(qx)^k F^{!}_{0,k-1}(x) = \sum_{k\geq 0}\frac{(qx)^k}{(1-x)^{\binom{k+1}{2}}}.
\intertext{For $d=1$ (self-modified weak ascent sequences) we get the following
formula:}
  \Self_1(q,x)
  &= 1 + qxF^{!}_1(x,qx) \\
  &= 1 + \sum_{k\geq 1}(qx)^k F^{!}_{1,k-1}(x) = \sum_{k\geq 0}(qx)^k(1+x)^{\binom{k}{2}}.
\end{align*}

Note that the diagonals of Table~\ref{table-selfmod} each appear
to tend to some constant. To bring out this pattern let us skip the
first $d$ ones of each row and left adjust. In other words we are
interested in the coefficients of
$\bigl(\Self_d(1,x)-[d\kern 0.1em]_x\bigr)/x^d$, where
$[d]_x=1+x+\cdots+x^{d-1}$; we display those coefficients in
Table~\ref{table-selfmod-adjusted}.
Additional data (not displayed in the table) even suggests that the
$q$-analogue $(\Self_d(q,x)-[d\kern 0.1em]_{qx})/(qx)^d$
also tends to some polynomial in $q$, and we show in the next theorem that
this is the case.
What can be said about the sequence that is emerging as $d\to \infty$?
Formally, this limit should be understood as follows.
Let $G_0(x)$, $G_1(x)$, $G_2(x)$, \dots be a sequence of power
series. Then
\[
  \lim_{k\to\infty} G_k(x) = \sum_{n\geq 0}c_nx^n
\]
if, given a nonnegative integer $n$, there is a corresponding $K$
such that the coefficient of $x^n$ in $G_k(x)$ is $c_n$ for all
$k\geq K$.

\begin{table}
\[
\begin{array}{c|rrrrrrrrrrrrr}
  d\hspace{2pt}\backslash\hspace{1pt} n \rule[-0.9ex]{0pt}{0pt}
  & 0 & 1 & 2 & 3 & 4 & 5 & 6 & 7 & 8 & 9 & 10 & 11 & 12 \\
  \hline \rule{0pt}{2.6ex}
  0& 1& 1& 2& 5& 14& 43& 143& 510& 1936& 7775& 32869& 145665& 674338 \\
  1& 1& 1& 2& 4& 10& 27& 81& 262& 910& 3363& 13150& 54135& 233671 \\
  2& 1& 1& 2& 4& 9& 23& 64& 194& 629& 2177& 7982& 30871& 125402 \\
  3& 1& 1& 2& 4& 9& 22& 58& 167& 515& 1698& 5925& 21810& 84310 \\
  4& 1& 1& 2& 4& 9& 22& 57& 158& 467& 1474& 4934& 17448& 64847 \\
  5& 1& 1& 2& 4& 9& 22& 57& 157& 454& 1387& 4476& 15243& 54606 \\
  6& 1& 1& 2& 4& 9& 22& 57& 157& 453& 1369& 4321& 14293& 49570 \\
  7& 1& 1& 2& 4& 9& 22& 57& 157& 453& 1368& 4297& 14027& 47615 \\
  8& 1& 1& 2& 4& 9& 22& 57& 157& 453& 1368& 4296& 13996& 47178 \\
  9& 1& 1& 2& 4& 9& 22& 57& 157& 453& 1368& 4296& 13995& 47139 \\
 10& 1& 1& 2& 4& 9& 22& 57& 157& 453& 1368& 4296& 13995& 47138
\end{array}
\]
\caption{The coefficients of $\bigl(\Self_d(1,x)-[d\kern 0.1em]_x\bigr)/x^d$}
\label{table-selfmod-adjusted}
\end{table}

\begin{theorem}\label{limit}
  We have
  \[
    \lim_{d\to\infty} \frac{1}{(qx)^d}\biggl(\Self_d(q,x)-[d\kern 0.1em]_{qx}\biggr)
    \,=\, \sum_{k\geq 0} (qx)^k(1+x)(1+2x)\cdots(1+kx).
  \]
\end{theorem}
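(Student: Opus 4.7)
The plan is to massage Theorem~\ref{self-gf} into a form where the coefficients visibly stabilize as $d\to\infty$. Starting from $\Self_d(q,x) = 1 + qxF^!_d(x,qx) = 1 + \sum_{n\geq 0} F^!_{d,n}(x)(qx)^{n+1}$, the key observation is that whenever $n<d$, every factor in $F^!_{d,n}(x) = \prod_{i=0}^n F_{d,i}(x)$ equals $1$ by definition~\eqref{Fdnx}. Hence the tail of the sum with $n\leq d-1$ reduces to $qx+(qx)^2+\cdots+(qx)^d$, and combining with $-[d\kern 0.1em]_{qx}$ telescopes to $(qx)^d$. After reindexing the remaining sum with $n=d+k$ and dividing through, I obtain the identity
\[
  \frac{\Self_d(q,x)-[d\kern 0.1em]_{qx}}{(qx)^d} \,=\, 1 + qx \sum_{k\geq 0} F^!_{d,d+k}(x)\,(qx)^k.
\]

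Next I would prove the stability lemma: for every $k\geq 0$ and every $d\geq k+1$,
\[
  F^!_{d,d+k}(x) \,=\, (1+x)(1+2x)\cdots(1+(k+1)x).
\]
The engine is a short induction on $k$ showing $F_{d,d+k}(x) = 1+(k+1)x$ whenever $d\geq k+1$. The base case $F_{d,d}(x) = F_{d,d-1}(x) + xF_{d,0}(x) = 1+x$ is immediate from~\eqref{Fdnx}. For the induction step, the recurrence gives $F_{d,d+k}(x) = F_{d,d+k-1}(x) + xF_{d,k}(x)$; the first summand equals $1+kx$ by the inductive hypothesis (valid since $d\geq k+1\geq k$), while the second summand equals $x$ because $k<d$ forces $F_{d,k}(x)=1$. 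Multiplying gives $F^!_{d,d+k}(x) = \prod_{i=0}^{d+k} F_{d,i}(x)$, where the factors with $i<d$ contribute $1$ and the factors with $i=d+j$ for $0\leq j\leq k$ contribute $1+(j+1)x$ (the condition $d\geq k+1\geq j+1$ justifies each application of the stabilized formula).

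Finally, I would take the limit coefficient-by-coefficient in the sense defined just before the theorem. By the lemma, for each fixed $k\geq 0$ and every $d\geq k+1$ the coefficient of $(qx)^{k+1}$ in the displayed identity is exactly $(1+x)(1+2x)\cdots(1+(k+1)x)$; the constant term is always $1$. Hence
\[
  \lim_{d\to\infty} \frac{\Self_d(q,x)-[d\kern 0.1em]_{qx}}{(qx)^d} \,=\, 1 + \sum_{k\geq 0}(qx)^{k+1}(1+x)(1+2x)\cdots(1+(k+1)x),
\]
and reindexing with $j=k+1$ (absorbing the standalone $1$ as the empty-product $j=0$ term) yields the claimed sum $\sum_{k\geq 0}(qx)^k(1+x)(1+2x)\cdots(1+kx)$.

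The only real technical point is making the stability argument crisp; everything else is bookkeeping with geometric series. I expect the potential pitfall to be the indexing at the boundary $n=d-1$ versus $n=d$, since a careless off-by-one will spoil the telescoping that produces the clean $(qx)^d$ factor we are dividing by.
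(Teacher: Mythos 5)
Your proof is correct, but it takes a genuinely different route from the paper. The paper argues combinatorially: it first gives a combinatorial interpretation of the coefficients $r_n(q)$ of the right-hand side in terms of the set $\R_n$ of restricted growth functions $1B_1 2B_2\ldots kB_k$ with each $B_i$ empty or a single letter at most $i$, and then proves $f_{d,n+d}(q)=q^d r_n(q)$ for $d\ge n$ by an explicit bijection $\Self_{d,n+d}\to\R_n$ that strips the forced prefix $1\,2\ldots d$ from the factoring of Lemma~\ref{self-mod-factoring}. You instead work purely algebraically from Theorem~\ref{self-gf}: you isolate the trivial terms $F^{!}_{d,n}(x)=1$ for $n<d$, telescope against $[d\kern 0.1em]_{qx}$ to produce the $(qx)^d$ factor, and prove the stabilization $F_{d,d+k}(x)=1+(k+1)x$ for $d\ge k+1$ by a short induction, whence $F^{!}_{d,d+k}(x)=(1+x)(1+2x)\cdots(1+(k+1)x)$. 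Your computations check out, including the boundary bookkeeping at $n=d-1$ versus $n=d$ and the reindexing that absorbs the leading $1$ as the empty product; the only slight looseness is speaking of ``the coefficient of $(qx)^{k+1}$,'' whereas the paper's limit is defined coefficient-by-coefficient in $x^n$ --- but since only the terms with $k\le n-1$ contribute to $x^n$ and all of these stabilize once $d\ge n$, the argument is sound. What each approach buys: yours is shorter and self-contained given Theorem~\ref{self-gf}, needing no combinatorial model of the limit series; the paper's yields as a by-product a bijective explanation of why the limiting sequence is OEIS entry A124380, which is part of the point of the surrounding discussion.
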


Let $R(q,x)$ be the generating function on the right-hand side of the
claimed identity, and let $r_n(q)$ be the coefficient of
$x^n$ in $R(q,x)$. The sequence $(r_n(1))_{n\geq 0}$ starts
\[
  1,1,2,4,9,22,57,157,453,1368,4296,13995,47138,163779,585741,
\]
which agrees with the last row of
Table~\ref{table-selfmod-adjusted}. The generating function $R(1,x)$
defines entry A124380 in the OEIS. No combinatorial
interpretation is given for that entry, but it is now straightforward to
give one. We give this combinatorial interpretation in the next lemma
and we shall use it in the proof of Theorem~\ref{limit}.

\begin{lemma}
  Let $\R_n$ be the set of restricted growth functions
  $\alpha = 1B_12B_2\ldots kB_k$ such that, for each $i\in[k]$, either
  $B_i$ is empty or $B_i=b_i$ is a single letter with $b_i\leq i$. With
  $r_n(q)$ defined as above, we have
  \[
    r_n(q) = \sum_{\alpha\in\R_n} q^{\max\alpha}.
  \]
\end{lemma}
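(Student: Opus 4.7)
The plan is to prove this by a direct bivariate generating function calculation, grouping elements of $\R_n$ by their maximum value. Since the form $\alpha = 1B_12B_2\ldots kB_k$ with each $B_i$ either empty or a single letter in $[i]$ is given as the defining form of $\R$, the structure of the object already decomposes into independent choices for each of the $k$ blocks $B_i$, so the enumeration factors cleanly.

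First, I would check that each such $\alpha$ is indeed a valid restricted growth function with $\max\alpha = k$: the explicit letter ``$i$'' immediately before $B_i$ is the leftmost occurrence of $i$, and every letter of $B_i$ lies in $[i]$ and thus repeats a value already seen. Conversely, every $\alpha \in \R_n$ with $\max\alpha = k$ arises uniquely this way. Next, I would observe that $|\alpha| = k + \#\{i : B_i \ne \emptyset\}$, and that the choices for each $B_i$ are independent: either empty (contributing $1$ to the length generating function) or a single letter with $i$ possible values (contributing $ix$). Thus the bivariate generating function restricted to those $\alpha \in \R$ with $\max\alpha = k$ is
$$
\sum_{\substack{\alpha \in \R \\ \max\alpha = k}} q^{\max\alpha} x^{|\alpha|} \;=\; q^k x^k \prod_{i=1}^{k}(1+ix).
$$

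Summing over $k \geq 0$ then yields
$$
\sum_{\alpha \in \R} q^{\max\alpha} x^{|\alpha|} \;=\; \sum_{k \geq 0} (qx)^k (1+x)(1+2x)\cdots(1+kx) \;=\; R(q,x),
$$
and extracting the coefficient of $x^n$ on both sides gives $r_n(q) = \sum_{\alpha \in \R_n} q^{\max\alpha}$, as required. There is no real obstacle here: the argument is a routine ``choose each block independently, multiply generating functions'' calculation, and the only thing worth stating carefully is the bijection between $\R_n$ (with $\max\alpha = k$) and tuples $(B_1, \ldots, B_k)$ with $B_i \in \{\emptyset\} \cup [i]$, so that the double count by $(\max, \text{length})$ matches the product form defining $R(q,x)$ term by term.
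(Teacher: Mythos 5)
Your proof is correct and takes essentially the same approach as the paper: both decompose $\alpha\in\R_n$ with $\max\alpha=k$ into the blocks $B_1,\dots,B_k$, count the choices for each $B_i$ by the factor $1+ix$, and sum the resulting products over $k$ to recover $R(q,x)$. The paper merely frames this as the $|B_i|\leq 1$ restriction of the classical Stirling-number generating function argument for $\RGF_n$, which replaces each factor $1/(1-ix)$ by $1+ix$; the substance is identical to yours.
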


\begin{proof}
  The following generating function for the Stirling numbers of the
  second kind, $S(n,k) = \#\{\alpha\in\RGF_n: \max\alpha = k\}$, is
  well-known~\cite[equation 1.94c]{Stanley:ec1}:
  \[
    \sum_{n\geq 0} x^n \sum_{\alpha\in\RGF_n}q^{\max\alpha}
    = \sum_{k\geq 0}\frac{(qx)^k}{(1-x)(1-2x)\cdots(1-kx)}.
  \]
  The usual proof goes as follows. Let $\alpha \in \RGF_n$ with $\max\alpha = k$
  be given. Factor $\alpha = 1B_12B_2\ldots kB_k$ in which each letter
  of $B_i$ is at most $i$. The possible choices for $B_i$ are counted by
  the generating function $1/(1-ix)=1+ix+i^2x^2+\cdots$ and hence the
  possible choices for $\alpha$ such that $\max\alpha = k$ are counted by
  $x^k/((1-x)(1-2x)\cdots(1-kx))$. For $\alpha \in \R_n$ with
  $\max\alpha = k$ the same approach applies, but now $|B_i|\leq 1$ and
  the choices for $B_i$ are counted by $1+ix$.
\end{proof}

\begin{proof}[Proof of Theorem~\ref{limit}]
  Letting $f_{d,n}(q)$ denote the coefficient of $x^n$
  in $\Self_d(q,x)$ we find that the coefficient of $x^n$ in
  $(\Self_d(q,x)-[d\kern 0.1em]_{qx})/(qx)^d$ is
  $f_{d,n+d}(q)/q^d$.
  Since the limit to be proved has $d$ going to $\infty$ it suffices to show that, for any $d\geq 0$ and $d\ge n$,
  \[
  f_{d,n+d}(q) = q^dr_n(q).
  \]
  Assume $d\geq n$ and let $\alpha\in\Self_{d,n+d}$ with
  $k=\max\alpha$ be given. Write $\alpha=1B_12B_2\ldots kB_k$, where
  each $iB_i$ is decreasing with pace $d$. Note that $B_i$ is empty
  whenever $i\leq d$. Moreover, 
  since $n\le d$ we have that
  $B_{d+i}$ is empty or $B_{d+i}=b_{d+i}$
  is a singleton with $b_{d+i}\in [i]$ for $i\geq 1$. The desired
  bijection from $\Self_{d,n+d}$ onto $\R_n$ is now provided by
  \[
    1\,2\,\ldots\, d\, (d+1)B_{d+1}\,(d+2)B_{d+2}\, \ldots\, kB_k
    \;\;\mapsto\;\; 1B_{d+1}\,2 B_{d+2}\, \ldots \,(k-d)B_k.
  \]
  For instance, if $d=4$ and $n=3$, then
  \[1234516 \mapsto 112,\;\;
    1234561 \mapsto 121,\;\;
    1234562 \mapsto 122,\;\;
    1234567 \mapsto 123.
  \]
  Finally, if $\alpha \mapsto \beta$ in this way, then
  $\max\alpha=d+\max\beta$, which conludes the proof.
\end{proof}

\section{Self-modified \texorpdfstring{$d$}{d}-Fishburn permutations}

Let $\Sym$ denote the set of all permutations 
$\pi=p_1 p_2\ldots p_n$
of $[n]$ for all $n\ge0$.
Inspired by a question of Dukes and Sagan~\cite{DS:das}, Zang and
Zhou~\cite{ZZ:dperm} have recently introduced the set $\F_d$ of
\emph{$d$-Fishburn permutations}. They are a generalization of
\emph{Fishburn permutations}~\cite{BMCDK:2fp}
where the recursive structure of $\F_d$ is encoded by $\Ascseq_d$,
for every $d\ge 0$. The bijection between $\F_d$ and $\Ascseq_d$
defined this way is denoted by
$$
\Phi_d:\dA\to\F_d.
$$
From now on, we denote by $\Sym(p)$ the set of permutations
\emph{avoiding} the pattern~$p$ (see Bevan’s note~\cite{Bevan:pat}
for a brief introduction to the permutation patterns field).
Similarly, we denote by $U(p)$ the set of elements of $U$ avoiding $p$, where $U$ is a generic set.
Fishburn permutations are defined as the set $\F_0=\Sym(\fishpattern)$
of permutations avoiding~$\fishpattern$,
where $\fishpattern$ is the following (bivincular) mesh pattern~\cite{BMCDK:2fp, BC2011}:
$$
\fishpattern=
\begin{tikzpicture}[scale=0.35, baseline=17.5pt]
\fill[NE-lines] (1,0) rectangle (2,4);
\fill[NE-lines] (0,1) rectangle (4,2);
\draw (0.001,0.001) grid (3.999,3.999);
\filldraw (1,2) circle (5pt);
\filldraw (2,3) circle (5pt);
\filldraw (3,1) circle (5pt);
\end{tikzpicture}\ .
$$
That is, $\F_0$ is the set of all $\pi\in\Sym$ which do not contain a
subsequence $p_i p_j p_k$ with $j=i+1 <k$ and $p_k+1=p_i<p_j$.

We~\cite{CCS:mod} gave an alternative definition of $\F_d$ that
is reminiscent of the classical case $d=0$, which we now recall.
First, we describe a procedure to determine the \emph{$d$-active}
and \emph{$d$-inactive}~\cite{ZZ:dperm}
elements of a given permutation $\pi=p_1\ldots p_n$.
Let $\pi^{(k)}$ denote the restriction of $\pi$ to the elements of $[k]$.
\begin{itemize}
\item Set $1$ to be a $d$-active element.
\item For $k=2,3,\dots,n$, let $k$ be $d$-inactive if $k$ is to the
left of $k-1$ in $\pi$ and there exist at least $d$ elements of
$\pi^{(k)}$ between $k$ and $k-1$ that are $d$-active.
Otherwise, $k$ is said to be $d$-active.
\end{itemize}
Then, we say that a permutation~$\pi$ contains the \emph{$d$-Fishburn
pattern}~\cite{CCS:mod}, denoted by $\fishpattern_d$, if it contains an occurrence
$p_ip_jp_k$ of $\fishpattern$ where $p_i$ is $d$-inactive.
The other two elements $p_j$ and $p_k$ can be either $d$-active
or $d$-inactive.

As an example, let $\pi=2573164$ and $d=2$. We compute the $d$-active
elements of $\pi$, where such elements are set in boldface. It is easy to
see that the elements $k\le 4$ are $d$-active. Then, we have
$$
\pi^{(5)} = {\bf 2}\;5\;{\bf 3}\;{\bf 1}\;{\bf 4}.
$$
There are two $d$-active elements between~$5$ and~$4$, thus~$5$ is
$d$-inactive. Similarly, $6$ is $d$-active (since $6$ is to
the right of~$5$), but there are two $d$-active elements in
$$
\pi^{(7)} = {\bf 2}\;5\;7\;{\bf 3}\;{\bf 1}\;{\bf 6}\;{\bf 4}
$$
between~$7$ and~$6$, hence~$7$ is $d$-inactive. Finally, we see that
$\pi$ contains an occurrence of $\fishpattern_d$ realized by the elements
$5,7,4$.

With slight abuse of notation, we let
$\Sym(\fishpattern_d)$ be the set of permutations that do
not contain~$\fishpattern_d$. Finally, for every $d\ge 0$ we have
that~\cite[Prop.\ 7.1]{CCS:mod}
$$
\F_d=\Sym(\fishpattern_d).
$$

In the same paper, we showed that $d$-Fishburn permutations
can be alternatively obtained as the bijective image of
$d$-ascent sequences under the composition of the $d$-hat map with
the \emph{Burge transpose}~\cite{CC:tpb}, lifting a classical result by
Bousquet-M\'{e}lou~{\etal}~\cite{BMCDK:2fp} to any $d\ge 0$.
As we will build on this alternative description of $\F_d$,
we wish to recall the necessary tools and definitions.

Given an endofunction $\al=a_1\ldots a_n$, let
$$
\wDes\al =\{i\ge 2 \mid a_i \le a_{i-1} \}
$$
denote the set of \emph{weak descents} of $\al$.
The set of \emph{Burge words}~\cite{CC:tpb} is defined as
$$
\Bur_n=\left\lbrace
\binom{u}{\al}:u\in\WI_n,\;\al\in\Cay_n,\;\wDes(u)\subseteq\wDes(\al)
\right\rbrace,
$$
where $\binom{u}{\al}$ is a \emph{biword} (i.e.\ a pair of words of
the same length written one above each other)
and $\WI_n$ is the subset of $\Cay_n$ consisting of the weakly
increasing Cayley permutations. 
The \emph{Burge transpose} is a transposition operation~$T$ on~$\Bur_n$
defined as follows. Given $w=\binom{u}{\al}\in\Bur_n$, to compute $w^T$, first
turn each column of $w$ upside down. Now sort the columns in
weakly increasing order with respect to the top entry so that the entries below the elements in the upper row of a given value form a weakly decreasing sequence.
For instance, we have
$$
\binom{1\ 2\ 3\ 4\ 5\ 6\ 7\ 8\ 9}{1\ 4\ 1\ 2\ 5\ 2\ 2\ 3\ 2}^{\!T} \\[1ex]
= \binom{1\ 1\ 2\ 2\ 2\ 2\ 3\ 4\ 5}{3\ 1\ 9\ 7\ 6\ 4\ 8\ 2\ 5},
$$
since the two $1$'s in the bottom row of the first array become the  first two elements in the top of the transposed array, and the corresponding $1$ and $3$ in the top row of the first array get sorted into the decreasing sequence $31$ in the transpose, etc.
Observe that $T$ is an involution on $\Bur_n$. Furthermore,
by picking $\identity_n=12\ldots n$ as the top row, we obtain
a map $\burget:\Cay_n\to\Sym_n$ defined by\footnote{The map
$\burget$ was originally~\cite{CC:tpb} denoted by the letter $\ga$.}
$$
\binom{\identity_n}{\al}^{\!T}=\binom{\sort(\al)}{\burget(\al)},
$$
for any $\al\in\Cay$, where $\sort(\al)$ is obtained by sorting the
entries of $\al$ in weakly increasing order.
As a special case, if $\al$ is a permutation, then $\burget(\al)=\al^{-1}$;
note that this proves that $\burget$ is surjective.
Finally, for every $d\ge 0$ we have~\cite[Thm.\ 7.10]{CCS:mod}
$$
\Phi_d=\burget\circ\,\dhat.
$$
In other words, the diagram
\begin{equation}\label{triangle_any_d}
\begin{tikzpicture}[scale=0.4, baseline=20.5pt]
  \matrix (m) [matrix of math nodes,row sep=3.5em,column sep=7em,minimum width=2em]
  {
    \kAscseq{d} & \F_d  \\
            & \kModasc{d}    \\
  };
  \path[-stealth, semithick]
  (m-1-1) edge node [above, yshift=2pt] {$\Phi_d$} (m-1-2)
  (m-1-1) edge node [below, xshift=-10pt]{$\dhat$} (m-2-2)
  (m-2-2) edge node [right] {$\burget$} (m-1-2);
\end{tikzpicture}
\end{equation}
commutes for every $d\ge 0$ and all the arrows are size-preserving
bijections.

As an illustration, let $\al=1124253$. Note that $\al\notin\kAscseq{0}$,
but $\al\in\kAscseq{d}$ for $d\ge 1$. The $2$-Fishburn permutation
associated with $\al$ under $\Phi_2$ is $\pi=\Phi_2(\al)=2537146$, but we
shall omit the details here. Instead, we follow the bottom side of
diagram~\eqref{triangle_any_d} and show that
$\burget\circ\khat{2}(\al)=\pi$. To start, we compute $\khat{2}(\al)$.
We have $\Asc_2\al=(1,2,3,4,6)$, so that:
\begin{align*}
  \al              &= 1124253;\\
  M(\al,1)         &= \underline{1}124253;\\
  M(\al,1,2)       &= \mathbf{2}\underline{1}24253;\\
  M(\al,1,2,3)     &= \mathbf{3}1\underline{2}4253;\\
  M(\al,1,2,3,4)   &= 312\underline{4}253;\\
  M(\al,1,2,3,4,6) &= 31242\underline{5}3=\khat{2}(\al).
\end{align*}
Finally, we apply the Burge transpose
$$
\binom{\identity_7}{\al}^{\!T}
=\binom{1\ 2\ 3\ 4\ 5\ 6\ 7}{3\ 1\ 2\ 4\ 2\ 5\ 3}^{\!T} \\[1ex]
= \binom{1\ 2\ 2\ 3\ 3\ 4\ 5}{2\ 5\ 3\ 7\ 1\ 4\ 6}
$$
and obtain $\pi$ as the bottom row of the resulting biword.

For the rest of this section, let $d\ge 0$ be a fixed nonnegative
integer. Denote by
$$
\SMF_d=\Phi_d(\Self_d)
$$
the set of $d$-Fishburn permutations corresponding to
self-modified $d$-ascent sequences under the bijection $\Phi_d$.
Bousquet-Mélou {\etal}~\cite{BMCDK:2fp} proved 
a beautiful characterization of $\SMF_0$ in terms of pattern avoidance that we wish to generalize to all $d$,
In particular, when $d=0$, they showed
$\SMF_0=\Sym(3\bar{1}52\bar{4})$.
Here a permutation $\pi$ avoids the barred pattern
$3\bar{1}52\bar{4}$ if every occurrence of the pattern $231$
plays the role of $352$ in an occurrence of $31524$.
Formally, for every $i<j<k$ such that $p_k<p_i<p_j$,
there exist $\ell\in\{i+1,i+2,\dots,j-1\}$ and $m>k$ such that
$p_ip_{\ell}p_jp_kp_m$ is an occurrence of $31524$.
More visually, in terms of mesh patterns,
$$
\SMF_0=\Sym\left(
\begin{tikzpicture}[scale=0.4, baseline=19pt]
\fill[NE-lines] (1,0) rectangle (2,1);
\draw [semithick] (0,1) -- (4,1);
\draw [semithick] (0,2) -- (4,2);
\draw [semithick] (0,3) -- (4,3);
\draw [semithick] (1,0) -- (1,4);
\draw [semithick] (2,0) -- (2,4);
\draw [semithick] (3,0) -- (3,4);
\filldraw (1,2) circle (5pt);
\filldraw (2,3) circle (5pt);
\filldraw (3,1) circle (5pt);
\end{tikzpicture},
\begin{tikzpicture}[scale=0.4, baseline=19pt]
\fill[NE-lines] (3,2) rectangle (4,3);
\draw [semithick] (0,1) -- (4,1);
\draw [semithick] (0,2) -- (4,2);
\draw [semithick] (0,3) -- (4,3);
\draw [semithick] (1,0) -- (1,4);
\draw [semithick] (2,0) -- (2,4);
\draw [semithick] (3,0) -- (3,4);
\filldraw (1,2) circle (5pt);
\filldraw (2,3) circle (5pt);
\filldraw (3,1) circle (5pt);
\end{tikzpicture}
\right).
$$
In the same spirit, we wish to characterize $\SMF_d$.
From now on, we say that a permutation $\pi=p_1\ldots p_n$
\emph{contains} $\selfpat_d$ if there are two indices $i<j$ such that
\begin{itemize}
\item $p_i=p_j+1$; and
\item $\asc(p_ip_{i+1}\ldots p_j)\leq d$.
\end{itemize}
Furthermore, with slight abuse of notation, we write
$\Sym(\selfpat_d)$ to denote the set of permutations that
do not contain~$\selfpat_d$. Our goal is to prove that
\begin{equation}\label{eq_SMF_patts}
\SMF_d=\Sym(3\bar{1}52\bar{4},\selfpat_d)=\SMF_0(\selfpat_d).
\end{equation}
Observe that
\begin{equation}\label{FtA}
\SMF_d=\Phi_d(\Self_d)
=\burget\bigl(\dhat(\Self_d)\bigr)
=\burget(\Self_d),
\end{equation}
where $\dhat(\Self_d)=\Self_d$ by definition of self-modified
$d$-ascent sequence. In other words, permutations in $\SMF_d$
are precisely those that are obtained by applying the Burge transpose
to some self-modified $d$-ascent sequence in $\Self_d$. 
Furthermore, by Lemma~\ref{self-mod-factoring} every
$\al\in\Self_d$ can be written $\al=1B_12B_2\ldots kB_k$,
where $k=\max\al$ and each factor $iB_i$ is decreasing with
pace~$d$.

Since we know from Proposition~\ref{selfmod_rgf_char} that
$\Self_d\subseteq\RGF$, it will be useful to describe the ascents of
permutations in $\burget(\RGF)$.
\begin{lemma}\label{asc_of_tRGF}
Let $\al\in\RGF$, $k=\max\al$, and
$$
\binom{\id}{\al}^T=\binom{\sort(\al)}{\pi}
=\begin{pmatrix}
1\dots 1 & 2\dots 2 & \ldots & k\dots k\\
p_1\dots p_{i_1} & p_{i_1+1}\dots p_{i_2} & \ldots & p_{i_{k-1}+1}\dots p_{i_k}
\end{pmatrix}.
$$
Then:
\begin{enumerate}
\item[(a)] $\Asc\pi=\{1,i_1+1,i_2+1,\dots,i_{k-1}+1\}$.
\item[(b)] For each $i<j$, we have
$$
\asc(p_i\dots p_j)=\ell_j-\ell_i+1,
$$
where $\ell_i$ and $\ell_j$ are the entries above $p_i$ and $p_j$
in $\sort(\al)$, respectively.
\end{enumerate}
\end{lemma}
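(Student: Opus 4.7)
The plan is to unpack the definition of the Burge transpose applied to $\binom{\id}{\al}$ and then exploit the RGF property of $\al$. After flipping each column of $\binom{\id}{\al}$ upside down and sorting the resulting columns in weakly increasing order of their top entries, the columns with a common top entry $v$ form a ``block'' whose bottom entries, by the sort-breaker in the definition of $T$, are arranged in weakly decreasing order. Since the flipped top entries came from $\id$ and are therefore all distinct, the bottom entries inside each block are \emph{strictly} decreasing. Hence inside the block with top $v$ the bottom row consists of the elements of $\al^{-1}(v)$ listed from largest to smallest, and in particular no ascent of $\pi$ can occur between two consecutive positions of the same block.

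For part~(a), the remaining candidates for ascents of $\pi$ (besides position $1$, which is always an ascent by the convention of this paper) are the block-boundary positions $i_v+1$ for $v=1,\dots,k-1$. The last entry of block $v$ is $\min \al^{-1}(v)$, the position of the leftmost $v$ in $\al$, and the first entry of block $v+1$ is $\max \al^{-1}(v+1)$. Because $\al$ is a restricted growth function, the leftmost occurrence of $v$ appears strictly before the leftmost occurrence of $v+1$, so
$$
\min \al^{-1}(v) \;<\; \min \al^{-1}(v+1) \;\leq\; \max \al^{-1}(v+1),
$$
and hence every block boundary contributes an ascent. Together with position $1$, this yields exactly $\Asc\pi=\{1,i_1+1,\dots,i_{k-1}+1\}$.

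For part~(b), given $i<j$, the ascents of the subword $p_i\dots p_j$ consist of its first position (one ascent by convention) together with those indices $m$ with $i<m\leq j$ that are ascents of $\pi$. By part~(a), the latter are precisely the block boundaries $i_v+1$ with $\ell_i\leq v<\ell_j$, and there are exactly $\ell_j-\ell_i$ such values of $v$. Summing gives $\asc(p_i\dots p_j)=\ell_j-\ell_i+1$.

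The only step that requires genuine care is the strict inequality at the block boundary, and this is built into the RGF condition; everything else is a direct reading of the Burge transpose. I would therefore expect the write-up to be short, with the main obstacle being purely notational bookkeeping of the block indices $i_v$ and labels $\ell_i,\ell_j$.
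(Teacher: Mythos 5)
Your proposal is correct and follows essentially the same route as the paper's proof: within each block of the transposed biword the bottom entries are (weakly, in fact strictly) decreasing, so ascents can only occur at position $1$ and at block boundaries, where the RGF property (leftmost copies in increasing order) forces $\min\al^{-1}(v)<\max\al^{-1}(v+1)$; part (b) then follows by counting the block boundaries falling in the window $p_i\dots p_j$. The only cosmetic difference is that the paper phrases the boundary inequality as ``the leftmost copy of $j$ precedes each copy of $j+1$,'' whereas you go through $\min\al^{-1}(v)<\min\al^{-1}(v+1)\le\max\al^{-1}(v+1)$, which is the same fact.
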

\begin{proof}
To prove (a), let $j\in[k]$. By definition of Burge transpose,
the entry $p_{i_j}$ below the rightmost copy of~$j$ in $\sort(\al)$ is
the index of the leftmost copy of~$j$ in~$\al$.
Since $\al\in\RGF$, the leftmost copy of~$j$ precedes each copy
of~$j+1$ in~$\al$; that is, $p_{i_j}<p_{i_j+1}$ for each $j\in[k-1]$,
and thus $\Asc\pi\supseteq\{1,i_1+1,i_2+1,\dots,i_{k-1}+1\}$.
The other inclusion is trivial because each factor
$p_{i_j+1}\dots p_{i_{j+1}}$ is weakly decreasing by definition
of Burge transpose.

We shall now prove (b). By the first item, the ascents of~$\pi$
are precisely those entries that are below the leftmost copy of
some $j\in[k]$ in~$\sort(\al)$.
Since the first position is an ascent by definition,
the sequence $p_i\dots p_j$ contains exactly one ascent for each
number from $\ell_i$ (the entry above $p_i$) to $\ell_j$ (the entry above $p_j$); that is, we have
\[
\asc(p_i\dots p_j)=\ell_j-\ell_i+1.\qedhere
\]
\end{proof}

\begin{theorem}\label{thm_dselfperm}
We have
$$
\SMF_d = \SMF_0(\selfpat_d).
$$
\end{theorem}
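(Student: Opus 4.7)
The plan is to prove both inclusions $\SMF_d\subseteq\SMF_0(\selfpat_d)$ and $\SMF_0(\selfpat_d)\subseteq\SMF_d$ by passing through the $\Self_d$-side of the square using $\SMF_d=\burget(\Self_d)$ (equation~\eqref{FtA}), translating between occurrences of $\selfpat_d$ in $\pi$ and the pace-$d$ block structure of $\al\in\Self_0$ supplied by Lemma~\ref{self-mod-factoring}. The containment $\SMF_d\subseteq\SMF_0$ is immediate from the chain $\Self_d\subseteq\Self_0$ (Theorem~\ref{dself_zero}(a)) combined with $\burget$, so the content is in the two $\selfpat_d$ arguments.

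For $\SMF_d\subseteq\Sym(\selfpat_d)$, let $\pi=\burget(\al)$ with $\al\in\Self_d$ and factor $\al=1B_12B_2\ldots kB_k$ via Lemma~\ref{self-mod-factoring}, so each $iB_i$ is decreasing with pace~$d$. Suppose for a contradiction that indices $i<j$ give an occurrence of $\selfpat_d$, i.e.\ $p_i=p_j+1$ and $\asc(p_i\dots p_j)\le d$. Set $b=p_j$, so that $p_i=b+1$, and let $\ell_i=\al(b+1)$, $\ell_j=\al(b)$ be the entries of $\sort(\al)$ above them. The ordering rule of the Burge transpose, applied to $i<j$, forces $\ell_i\le\ell_j$; and Lemma~\ref{asc_of_tRGF}(b) then gives $\ell_j-\ell_i+1=\asc(p_i\dots p_j)\le d$. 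Now the two consecutive positions $b,b+1$ of $\al$ lie either inside some block $iB_i$ — where the pace-$d$ condition forces $\ell_j-\ell_i\ge d$, contradicting $\ell_j-\ell_i\le d-1$ — or at a block boundary, in which case $\ell_i=\al(b+1)$ is the leading entry $i'+1$ of a new block while $\ell_j=\al(b)\le i'$, contradicting $\ell_i\le\ell_j$.

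For the reverse inclusion $\SMF_0(\selfpat_d)\subseteq\SMF_d$, take $\pi\in\SMF_0(\selfpat_d)$; bijectivity of $\burget$ on $\kModasc{0}$ produces a unique $\al\in\Self_0$ with $\pi=\burget(\al)$. Factor $\al=1B_1\ldots kB_k$ using Lemma~\ref{self-mod-factoring} at $d=0$ (so each block is weakly decreasing). Since $\Self_0\subseteq\kAscseq{0}\subseteq\dA$, we have $\al\in\dA$, and by Lemma~\ref{self-mod-factoring} applied at $d$ it suffices to verify that every block is actually decreasing with pace $d$. Suppose not: pick a block with consecutive entries $c_r,c_{r+1}$ at positions $b,b+1$ of $\al$ such that $c_r-c_{r+1}<d$. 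Using that $\al(b+1)\le\al(b)$, the Burge ordering places $b+1$ weakly before $b$ in $\pi$ (either because its value group is smaller, or because within the same group positions are listed in decreasing order); thus there exist indices $i<j$ in $\pi$ with $p_i=b+1$ and $p_j=b$. By Lemma~\ref{asc_of_tRGF}(b), $\asc(p_i\dots p_j)=c_r-c_{r+1}+1\le d$, and $p_i=p_j+1$ by construction, exhibiting an occurrence of $\selfpat_d$ in $\pi$ — a contradiction.

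The main obstacle is the bookkeeping in Step~2: correctly deriving $\ell_i\le\ell_j$ from the Burge sorting rule and, independently, the genuine dichotomy between the within-block and block-boundary cases. Once those are pinned down, Lemma~\ref{asc_of_tRGF}(b) does the arithmetic in both directions, and Lemma~\ref{self-mod-factoring} supplies the exact combinatorial hypothesis needed to close the loop between pace-$d$ decreasing blocks on the $\al$-side and avoidance of $\selfpat_d$ on the $\pi$-side.
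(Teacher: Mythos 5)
Your proposal is correct and follows essentially the same route as the paper: reduce to $\SMF_d=\burget(\Self_d)$, use Lemma~\ref{self-mod-factoring} to translate membership in $\Self_d$ into the pace-$d$ condition on blocks, and use Lemma~\ref{asc_of_tRGF}(b) to convert that condition into the presence or absence of $\selfpat_d$ in $\pi$, in both directions. The only difference is cosmetic --- you phrase the forward direction as a direct contradiction starting from $\al\in\Self_d$, while the paper starts from $\al\in\Self_0$ and proves the biconditional ``$\pi$ contains $\selfpat_d$ iff $\al\notin\Self_d$'' --- and your explicit handling of the block-boundary case is a detail the paper leaves implicit.
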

\begin{proof}
A special case of item~(a) of Theorem~\ref{dself_zero} is 
that $\Self_d\subseteq\Self_0$. Using this and equation~\eqref{FtA} we obtain
$$
\SMF_d = \burget(\Self_d)
\subseteq\burget(\Self_0)
=\SMF_0.
$$
Now, let $\pi\in\SMF_0$.
Since $\SMF_0=\burget(\Self_0)$, there is some $\al\in\Self_0$
such that
$$
\binom{\id}{\al}^T=\binom{\sort(\al)}{\pi}.
$$
Furthermore, by Lemma~\ref{self-mod-factoring},
we have
$$
\al=1B_12B_2\dots kB_k,
$$
where $k=\max\al$ and each factor $iB_i$ is decreasing with pace zero,
i.e.\ is weakly decreasing.
To obtain the desired equality, it suffices to prove that
$\pi$ contains $\selfpat_d$ if and only if $\pi\notin\SMF_d$.

Let us start with the forward direction.
By definition, $\pi$ containing $\selfpat_d$ means that there are indices $i<j$ with
$p_i=p_j+1$ and $\asc(p_i\dots p_j)\le d$.
Now applying item~(b) of Lemma~\ref{asc_of_tRGF} (which can be done since $\alpha$ is an RGF)
shows that the ascent inequality implies $\ell_j-\ell_i<d$ where we are using the notation of the lemma. But $p_i=p_j+1$ so that
$$
\binom{\id}{\al}=\binom{\dots p_j\ p_i \dots}{\dots \ell_j\ \ell_i\dots}.
$$
Since $i<j$ we have that $\ell_j\ge\ell_i$ and this forces the pair
$\ell_j\ell_i$ to be part of the same factor $k B_k$ above.
On the other hand $\ell_j-\ell_i<d$ so that factor cannot be decreasing with pace $d$.
Using Lemma~\ref{self-mod-factoring} again, we see that $\al\notin\Self_d$.
Thus, since $\burget$ is injective, $\pi=\burget(\al)\notin\burget(\Self_d)=\SMF_d$ as desired.

On the other hand, suppose that $\pi\notin\SMF_d$, i.e.\
$\al\notin\Self_d$. Therefore, at least one of the blocks
$B_1,\ldots,B_k$ of $\al$ (which are weakly decreasing since
$\al\in\Self_0$) is not decreasing with pace~$d$.
Let $a_{h}a_{h+1}$ be two consecutive elements in $B_i$,
$1\le i\le k$, with $a_{h}-a_{h+1}<d$.
Note that $a_{h}\ge a_{h+1}$ since $B_i$ is weakly decreasing.
Now, applying the Burge transpose yields
$$
\binom{\id}{\al}^T
=\begin{pmatrix}
\dots & h & h+1 & \dots\\
\dots & a_{h} & a_{h+1} & \dots
\end{pmatrix}^T
=\begin{pmatrix}
\dots & a_{h+1} & \dots & a_{h} & \dots\\
\dots & h+1 & \dots & h & \dots
\end{pmatrix}=\binom{\sort(\al)}{\pi}.
$$
Finally, the elements $h+1$ and $h$ form an occurrence of
$\selfpat_d$ in $\pi$ since by item~(b) of Lemma~\ref{asc_of_tRGF}
the number of ascents contained between them is equal to
$$
a_{h}-a_{h+1}+1<d+1\le d.
$$
This concludes the proof.
\end{proof}

\section{Pattern avoidance in \texorpdfstring{$\SMF_d$}{SMF}}

The first two authors have developed a theory of transport
of patterns between Fishburn permutations and modified ascent
sequences~\cite{CC:tpb}. We recently showed that the same machinery
also applies to $d$-Fishburn permutations and
modified $d$-ascent sequences~\cite[Thm.\ 8.6]{CCS:mod}.
More explicitly, for any $d\ge 0$ and permutation~$\tau$,
$$
\burget:\dModasc(B_{\tau})\longrightarrow\F_d(\tau)
$$
is a size-preserving bijection, where $B_{\tau}$ is a finite set called
the \emph{Fishburn basis} of~$\tau$, $\dModasc(B_{\tau})$ is the set
of modified $d$-ascent sequences avoiding every pattern in $B_{\tau}$,
and $\F_d(\tau)$ is the set of $d$-Fishburn permutations avoiding $\tau$.
Moreover, there is a constructive procedure to compute~$B_{\tau}$~\cite{CC:tpb}.

Since the map~$\burget$ is injective on
$\dModasc$~\cite[Cor.\ 4.7]{CCS:mod}, it is also injective on
$\Self_d=\dA\cap\dModasc$. The following is a
corollary to the previous discussion and the transport
theorem~\cite[Thm.\ 8.6]{CCS:mod}.

\begin{corollary}\label{transport_thm}
  For any $d\ge 0$ and permutation~$\tau$, the map
  $
  \burget:\Self_d(B_{\tau})\rightarrow\SMF_d(\tau)
  $
  is a size-preserving bijection.
\end{corollary}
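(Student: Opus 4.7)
The plan is to deduce the corollary by restricting the transport theorem for $\burget:\dModasc(B_{\tau})\to\F_d(\tau)$ to the subset $\Self_d(B_\tau)\subseteq\dModasc(B_\tau)$, and then verifying that the image is exactly $\SMF_d(\tau)$ by exploiting injectivity of $\burget$ on $\dModasc$.

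First I would observe that, by Theorem~\ref{dself_char}, $\Self_d=\dA\cap\dModasc\subseteq\dModasc$, so $\Self_d(B_\tau)\subseteq\dModasc(B_\tau)$. Hence the restriction of $\burget$ to $\Self_d(B_\tau)$ is a well-defined, size-preserving, and injective map into $\F_d(\tau)$ (injectivity being inherited from the injectivity of $\burget$ on $\dModasc$). It remains to identify its image with $\SMF_d(\tau)=\SMF_d\cap\F_d(\tau)$.

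For the forward containment, take $\alpha\in\Self_d(B_\tau)$. Then $\burget(\alpha)\in\F_d(\tau)$ by the transport theorem, while $\burget(\alpha)\in\burget(\Self_d)=\SMF_d$ by equation~\eqref{FtA}. Thus $\burget(\alpha)\in\SMF_d(\tau)$. For the reverse containment, take $\pi\in\SMF_d(\tau)$. Since $\pi\in\F_d(\tau)$, the transport theorem supplies a unique $\alpha\in\dModasc(B_\tau)$ with $\burget(\alpha)=\pi$. On the other hand, since $\pi\in\SMF_d=\burget(\Self_d)$, there exists $\alpha'\in\Self_d$ with $\burget(\alpha')=\pi$. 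Both $\alpha$ and $\alpha'$ lie in $\dModasc$, so injectivity of $\burget$ on $\dModasc$ forces $\alpha=\alpha'\in\Self_d$. Consequently $\alpha\in\Self_d\cap\dModasc(B_\tau)=\Self_d(B_\tau)$, and $\pi$ lies in the image as required.

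There is no real obstacle here: the statement is essentially a diagram-chase combining the transport theorem with the identity $\SMF_d=\burget(\Self_d)$ and the injectivity of $\burget$ on $\dModasc$. The only point that needs care is ensuring that the two preimages of $\pi$ furnished by the two different results actually coincide, which is exactly where injectivity of $\burget$ on $\dModasc$ (and the inclusion $\Self_d\subseteq\dModasc$) is invoked.
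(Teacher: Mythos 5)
Your argument is correct and follows exactly the route the paper intends: the paper omits a written proof, presenting the statement as an immediate corollary of the transport theorem $\burget:\dModasc(B_{\tau})\to\F_d(\tau)$, the injectivity of $\burget$ on $\dModasc$ (hence on $\Self_d\subseteq\dModasc$), and the identity $\SMF_d=\burget(\Self_d)$ from equation~\eqref{FtA}. Your write-up simply fills in the diagram chase those ingredients imply, including the one point that genuinely needs care (matching the two preimages of $\pi$ via injectivity on $\dModasc$), so it is a faithful expansion of the paper's proof rather than a different argument.
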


For instance, let us consider the pattern $213$.
Recall~\cite{CC:tpb} that $B_{213}=\{112,213\}$ and hence
$\#\SMF_d(213)=\#\Self_d(112,213)$. 
It is a fundamental problem in the field of pattern avoidance to enumerate avoidance classes of permutations.
We will enumerate the class just mentioned below.
Interestingly, the generating function for this subset of self-modified $d$-ascent sequences can be expressed in terms of the same polynomials which enter into the computation of 
$\tilde{A}_d(q,x)$ for all self-modified $d$-ascent sequences without restriction; see Theorem~\ref{self-gf}.

\begin{lemma}\label{112-213-factoring}
  Let $\alpha\in\Self_d$ and write $\alpha = 1B_12B_2\ldots kB_k$ as in
  Lemma~\ref{self-mod-factoring}. Then $\alpha$ avoids $112$ and $213$
  if and only if $B_1=B_2=\cdots =B_{k-1}=\emptyset$.
\end{lemma}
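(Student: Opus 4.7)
The plan is to attack both directions via the factorization $\alpha=1B_1\,2B_2\cdots kB_k$ supplied by Lemma~\ref{self-mod-factoring}, using crucially the fact (Proposition~\ref{selfmod_rgf_char}) that $\alpha$ is a restricted growth function, so every letter of $B_i$ lies in $[1,i]$ and repeats some earlier leftmost copy.

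For the implication ``$B_1=\cdots=B_{k-1}=\emptyset \Rightarrow \alpha\in\Sym(112,213)$'', I would do a short position case analysis on $\alpha = 1\,2\cdots k\,B_k$, where the factor $kB_k$ is weakly decreasing (being decreasing with pace $d\ge 0$). A $112$ occurrence would require two equal entries followed by a strictly larger one, but in the initial run $1\,2\cdots k$ all entries are distinct, and as soon as a value $v$ reappears it must sit inside $B_k$, after which every subsequent entry is also inside $B_k$ and is at most $v$. Similarly, a $213$ occurrence at positions $i<j<\ell$ requires $p_j<p_i$, which forces $j\in B_k$ since $1\,2\cdots k$ is increasing; but then $\ell$ also lies in $B_k$ and the decreasing pattern of $B_k$ gives $p_\ell\le p_j<p_i$, a contradiction.

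For the converse, I would argue the contrapositive: if some $B_i$ with $i<k$ is nonempty, pick any letter $b$ of $B_i$. Because $iB_i$ is decreasing with pace $d$ we have $b\le i-d\le i$, so by the RGF property there is a leftmost copy of $b$ at a position strictly before the block $B_i$. Moreover, since $i<k$, the letter $i+1$ appears after $B_i$ and satisfies $i+1>b$. The triple (earlier copy of $b$, the $b$ in $B_i$, later $i+1$) is an occurrence of $112$ in $\alpha$, contradicting $112$-avoidance.

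The only real piece of bookkeeping is verifying, in the converse direction, that the first copy of $b$ really precedes the second one, which is immediate from the RGF structure together with $b\le i$; no genuine obstacle arises. A mild curiosity worth flagging in the write-up is that $112$-avoidance alone already forces $B_1=\cdots=B_{k-1}=\emptyset$, so $213$-avoidance is a free consequence rather than an independent hypothesis — the lemma is simply stated in the symmetric form because both patterns will be needed together elsewhere.
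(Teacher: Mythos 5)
Your proof is correct, and on the forward direction it is essentially the paper's argument: with $\alpha = 12\ldots kB_k$ and $kB_k$ weakly decreasing (pace $d\ge 0$), the final ascending pair of any occurrence of $112$ or $213$ would have to begin inside $B_k$, which is impossible. The converse, however, takes a genuinely different (if small) route. The paper lets $b$ be the first letter of a nonempty $B_i$ with $i<k$ and reads a witness directly off the factorization: the triple $i\,b\,k$ is an occurrence of $112$ when $b=i$ (possible only for $d=0$) and of $213$ when $b<i$. You instead pair the letter $b\in B_i$ with its leftmost copy — which strictly precedes the block $B_i$ because $b\le i$ and, by the RGF structure from Proposition~\ref{selfmod_rgf_char} and the proof of Lemma~\ref{self-mod-factoring}, the leftmost copy of $b$ is the explicit letter heading the factor $bB_b$ — together with the later letter $i+1$, always producing an occurrence of $112$. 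This buys a slightly stronger statement, which you correctly flag: within $\Self_d$, avoidance of $112$ alone forces $B_1=\cdots=B_{k-1}=\emptyset$, so $213$-avoidance comes for free; the price is an appeal to the leftmost-copy structure that the paper's two-case split avoids. Both arguments are sound, and your bookkeeping concern (that the leftmost copy of $b$ precedes the copy in $B_i$) is indeed immediate from $b\le i$.
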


\begin{proof}
  Assume that $B_1=B_2=\cdots =B_{k-1}=\emptyset$ so that
  $\alpha = 12\ldots kB_k$. Since the last two letters of $112$ form an
  ascent and $kB_k$ is decreasing (with pace $d$), any occurrence of
  $112$ would have to wholly reside in the prefix $12\ldots k$ of
  $\alpha$, but that is clearly impossible. Similarly, $\alpha$ avoids
  $213$. To prove the converse, assume that $B_i\neq \emptyset$ for some
  $i\in [k-1]$ and let $b$ be the first letter of $B_i$. If $b = i$
  (this can only happen if $d=0$), then $ibk$ is an occurrence of $112$
  in $\alpha$. Otherwise, $b < i$ and $ibk$ is an occurrence of $213$ in
  $\alpha$.
\end{proof}

Recall the generating function $F_{d,n }(x)$ as defined in~\eqref{Fdnx}.

\begin{proposition}\label{des-self-distrib}
  We have
  \[
    F_{d+1,n-1}(q) =
    \sum_{\al\in\Self_{d,n}(112,213)}q^{\wdes\al} =
    \sum_{\pi\in\SMF_{d,n}(213)}q^{\ides\pi},
  \]
  where $\wdes\al=\#\wDes\al$ is the number of weak descents of $\al$
  and $\ides\pi = \des(\pi^{-1})$ is the number of inverse descents
  of $\pi$.
\end{proposition}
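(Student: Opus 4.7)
The plan is to establish the two asserted equalities separately.

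For the first equality, the key input is Lemma~\ref{112-213-factoring}, which writes any $\al\in\Self_{d,n}(112,213)$ uniquely as $\al = 12\cdots k\,B_k$, where $k=\max\al$ and $kB_k = c_1\cdots c_\ell$ is decreasing with pace~$d$. Since $c_{j+1}\le c_j - d\le c_j$, the initial strictly increasing prefix contributes no weak descent while every interior position of $kB_k$ does, so $\wdes\al = \ell - 1 = n - k$. I would then encode $kB_k$ by the composition $(c_1-c_2,\dots,c_{\ell-1}-c_\ell,c_\ell)\vDash k$ whose first $\ell-1$ parts are at least $d$ and whose last part is positive, and shift the first $\ell-1$ parts down by $d$ to reduce the count to an ordinary binomial coefficient. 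Summing with the substitution $j=\ell-1=n-k$ gives
\[
\sum_{\al\in\Self_{d,n}(112,213)} q^{\wdes\al} \,=\, \sum_{j\ge 0}\binom{n-1-jd}{j}q^j,
\]
which matches the explicit expansion of $F_{d+1,n-1}(q)$ derived from~\eqref{dfib-gf} earlier in the section.

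For the second equality, Corollary~\ref{transport_thm} applied to $\tau=213$ yields a size-preserving bijection $\burget\colon\Self_d(112,213)\to\SMF_d(213)$, so it suffices to show that $\wdes\al = \ides\burget(\al)$ for every $\al$ in the domain. I would write $\binom{\id}{\al}^T = \binom{\sort\al}{\pi}$ and note that the columns of $\pi$ group into maximal blocks indexed by equal top-row entries, with bottom entries (the letters of $\pi$) within each block appearing in strictly decreasing order. A short case analysis on whether $a_{i-1}<a_i$, $a_{i-1}>a_i$, or $a_{i-1}=a_i$ then shows, in each case, that $\pi^{-1}(i-1)>\pi^{-1}(i)$ if and only if $a_{i-1}\ge a_i$; that is, descents of $\pi^{-1}$ correspond exactly (after a shift of index by one) to weak descents of $\al$, so $\ides\pi = \wdes\al$.

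I expect the main obstacle to be the statistic-transport step, since the enumeration reduces quickly to Lemma~\ref{112-213-factoring} combined with the known coefficient formula for $F_{d+1,n-1}(q)$. The Burge-transpose analysis, however, is entirely local and amounts to a careful block-by-block case check rather than any genuinely new combinatorial input; once the block structure of $\pi$ has been spelled out, the three cases are essentially forced.
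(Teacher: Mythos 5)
Your proposal is correct, and its overall architecture matches the paper's: both halves rest on the factorization of Lemma~\ref{112-213-factoring} and on the transport bijection of Corollary~\ref{transport_thm}. The execution differs in both halves, though. For the first equality the paper works at the level of generating functions: it reuses the composition encoding from the proof of Theorem~\ref{self-gf} to get $\sum_{\al}q^{\max\al}x^{|\al|}=1+qxF_d(x,qx)$ over $\Self_d(112,213)$, observes the identity $F_{d+1}(q,x)=F_d(qx,x)$ from~\eqref{dfib-gf}, and converts $\max$ into $\wdes$ via $\wdes\al+\max\al=|\al|$; you instead extract the coefficient directly, counting compositions of $k=n-j$ with $j$ parts of size at least $d$ followed by a positive last part to land on $\binom{n-1-jd}{j}$, which indeed agrees with the explicit expansion of $F_{d+1,n-1}(q)$. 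Your route is more hands-on and self-verifying at the coefficient level; the paper's is shorter and makes the structural identity $F_{d+1}(q,x)=F_d(qx,x)$ visible. For the second equality the paper simply cites \cite[Lemma 5.2]{CC:caypol} for the fact that $\burget$ sends weak descents to inverse descents, whereas you prove it from scratch by the block structure of the transposed biword: $i$ lands to the right of $i+1$ in $\pi$ exactly when $a_i\ge a_{i+1}$, either because the block of $a_{i+1}$ precedes that of $a_i$, or because positions within a block are listed in decreasing order. That case analysis is exactly the content of the cited lemma (and it holds for arbitrary Cayley permutations, not just those at hand), so your version is a more self-contained but equivalent justification. No gaps.
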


\begin{proof}
  Let $\al\in\Self_d(112,213)$, $k=\max\al$ and write
  $\alpha = 12\ldots kB_k$ as in Lemma~\ref{112-213-factoring}. By
  following the same reasoning as in the proof of Theorem~\ref{self-gf},
  we find that
  \begin{equation*}
    \sum_{\al}q^{\max\al}x^{|\al|}
    = 1 + \sum_{k\geq 1} q^kx^k F_{d,k-1}(x) = 1 + qxF_{d}(x,qx),
  \end{equation*}
  in which $\al$ ranges over $\Self_{d}(112,213)$. Using the explicit
  formula~\eqref{dfib-gf} from Section~\ref{section_enumdself} it is easy to verify that
  $F_{d+1}(q,x) = F_{d}(qx,x)$. Also, by Lemma~\ref{self-mod-factoring}, $\wdes\al+\max\al = |\al|$.
  Putting this all together we have
  \begin{equation*}
    \sum_{\al}q^{\wdes\al}x^{|\al|}
    = \sum_{\al}q^{-\max\al}(qx)^{|\al|}
    = 1 + xF_d(qx, x)
    = 1 + xF_{d+1}(q, x).
  \end{equation*}
  The first equality of our proposition follows by identifying coefficients in this identity.
 
  Now, the Fishburn basis of $213$ is $\{112,213\}$ and, by
  Corollary~\ref{transport_thm}, the Burge transpose~$\burget$ is
  a bijection between $\Self_{d,n}(112,213)$ and $\SMF_{d,n}(213)$.
  More specifically~\cite[Lemma 5.2]{CC:caypol}, a sequence
  $\al\in\Self_{d,n}(112,213)$ with~$k$ weak descents is mapped
  under~$\burget$ to a permutation $\pi\in\SMF_{d,n}(213)$ with~$k$
  inverse descents, and hence the second equality follows.
\end{proof}

\paragraph*{Acknowledgments.} We would like to thank the
referees for their careful reading of the manuscript and helpful
suggestions, which have significantly improved the clarity of
the paper.

%

\bibliographystyle{alpha}

\begin{thebibliography}{BMCDK10}
\newcommand{\etalchar}[1]{$^{#1}$}

\bibitem[BP15]{BP:asc}
Andrew~M. Baxter and Lara~K. Pudwell.
\newblock Ascent sequences avoiding pairs of patterns.
\newblock {\em Electron. J. Combin.}, 22(1):Paper 1.58, 23, 2015.

\bibitem[BCD23]{BCD:was}
Be\'{a}ta B\'{e}nyi, Anders Claesson, and Mark Dukes.
\newblock Weak ascent sequences and related combinatorial structures.
\newblock {\em European J. Combin.}, 108:Paper No. 103633, 19, 2023.

\bibitem[Bev15]{Bevan:pat}
David Bevan.
\newblock Permutation patterns: basic definitions and notations.
\newblock Preprint arXiv:1506.06673.

\bibitem[BMCDK10]{BMCDK:2fp}
Mireille Bousquet-M\'{e}lou, Anders Claesson, Mark Dukes, and Sergey Kitaev.
\newblock {$(2+2)$}-free posets, ascent sequences and pattern avoiding
  permutations.
\newblock {\em J. Combin. Theory Ser. A}, 117(7):884--909, 2010.

\bibitem[BC11]{BC2011}
Petter Br\"and\'en and Anders Claesson.
\newblock Mesh patterns and the expansion of permutation statistics as sums of
  permutation patterns.
\newblock {\em Electron. J. Combin.}, 18(2):Paper 5, 14, 2011.

\bibitem[CMS14]{CMS:res}
David Callan, Toufik Mansour, and Mark Shattuck.
\newblock Restricted ascent sequences and {C}atalan numbers.
\newblock {\em Appl. Anal. Discrete Math.}, 8(2):288--303, 2014.

\bibitem[Cer24]{Cer24}
Giulio Cerbai.
\newblock Modified ascent sequences and Bell numbers.
\newblock {\em Electron. J. Combin.}, 31(4), 2024.

\bibitem[Cer24b]{Cer24b}
Giulio Cerbai.
\newblock Pattern-avoiding modified ascent sequences.
\newblock Preprint arXiv:2401.10027.

\bibitem[CC23]{CC:tpb}
Giulio Cerbai and Anders Claesson.
\newblock Transport of patterns by {B}urge transpose.
\newblock {\em European J. Combin.}, 108:Paper No. 103630, 25, 2023.

\bibitem[CC23b]{CC:trees}
Giulio Cerbai and Anders Claesson.
\newblock Fishburn trees.
\newblock {\em Adv. Appl. Math.}, 151, 2023.

\bibitem[CC24]{CC:caypol}
Giulio Cerbai and Anders Claesson.
\newblock Caylerian polynomials.
\newblock {\em Discrete Mathematics}, 347(12), 2024.

\bibitem[CCS24]{CCS:mod}
Giulio Cerbai, Anders Claesson, and Bruce Sagan.
\newblock Modified difference ascent sequences and Fishburn structures.
\newblock Preprint arXiv:2406.12610.

\bibitem[CDD{\etalchar{+}}13]{CDDDS:asc}
William~Y.C. Chen, Alvin~Y.L. Dai, Theodore Dokos, Tim Dwyer, and Bruce~E.
  Sagan.
\newblock On 021-avoiding ascent sequences.
\newblock {\em Electron. J. Combin.}, 20, 2013.

\bibitem[CCEPG22]{CCEPG:pat}
Andrew~R. Conway, Miles Conway, Andrew Elvey~Price, and Anthony~J. Guttmann.
\newblock Pattern-avoiding ascent sequences of length 3.
\newblock {\em Electron. J. Combin.}, 29(4):Paper No. 4.25, 32, 2022.

\bibitem[DS24]{DS:das}
Mark Dukes and Bruce~E. Sagan.
\newblock Difference ascent sequences.
\newblock {\em Adv. Appl. Math.}, 159:102736, 2024.

\bibitem[DS11]{DuS:pat}
Paul Duncan and Einar Steingr\'imsson.
\newblock Pattern avoidance in ascent sequences.
\newblock {\em Electron. J. Combin.}, 18, 2011.

\bibitem[FJL{\etalchar{+}}20]{FJLY:new}
Shishuo Fu, Emma~Yu Jin, Zhicong Lin, Sherry H.~F. Yan, and Robin D.~P. Zhou.
\newblock A new decomposition of ascent sequences and {E}uler-{S}tirling
  statistics.
\newblock {\em J. Combin. Theory Ser. A}, 170:105141, 31, 2020.

\bibitem[GK23]{GK:num}
Anthony~J. Guttmann and V\'aclav Kot\v{e}\v{s}ovec.
\newblock A numerical study of {$L$}-convex polyominoes and 201-avoiding ascent
  sequences.
\newblock {\em S\'em. Lothar. Combin.}, 87B:Art. 2, 11, 2023.

\bibitem[JS23]{JS:sep}
Emma~Yu Jin and Michael~J. Schlosser.
\newblock Proof of a bi-symmetric septuple equidistribution on ascent
  sequences.
\newblock {\em Comb. Theory}, 3(1):Paper No. 5, 44, 2023.

\bibitem[KR17]{KR:pasc}
Sergey Kitaev and Jeffrey~B. Remmel.
\newblock A note on {$p$}-ascent sequences.
\newblock {\em J. Comb.}, 8(3):487--506, 2017.

\bibitem[MS14]{MS:enum}
Toufik Mansour and Mark Shattuck.
\newblock Some enumerative results related to ascent sequences.
\newblock {\em Discrete Mathematics}, 315--316:29--41, 2014.

\bibitem[{OEI}]{oeis}
{OEIS Foundation Inc.}
\newblock The {O}n-{L}ine {E}ncyclopedia of {I}nteger {S}equences.
\newblock Published electronically at \url{http://oeis.org}.

\bibitem[Sag06]{Sagan:rgf}
Bruce~E. Sagan,
\newblock Pattern avoidance in set partitions.
\newblock {\em Ars Combinatoria}, 94:79--96, 2006.

\bibitem[Sag20]{Sagan:aoc}
Bruce~E. Sagan.
\newblock {\em Combinatorics: the art of counting}, volume 210 of {\em Graduate
  Studies in Mathematics}.
\newblock American Mathematical Society, Providence, RI, 2020.

\bibitem[Sta86]{Stanley:ec1}
Richard~P. Stanley,
\newblock {\em Enumerative combinatorics. Vol.\ I.}
\newblock Wadsworth \& Brooks/Cole Advanced Books \& Software, Monterey, CA, 1986.

\bibitem[Yan14]{Yan:asc}
Sherry H.~F. Yan.
\newblock Ascent sequences and 3-nonnesting set partitions.
\newblock {\em European J. Combin.}, 39:80--94, 2014.

\bibitem[ZZ24]{ZZ:dperm}
Yongchun Zang and Robin~D.P. Zhou.
\newblock Difference ascent sequences and related combinatorial structures.
\newblock Preprint arXiv:2405.03275.

\end{thebibliography}

\end{document}